\newtheorem{theorem}{Theorem}[section]
\newtheorem{proposition}[theorem]{Proposition}
\newtheorem{lemma}[theorem]{Lemma}
\theoremstyle{definition}
\newtheorem{definition}[theorem]{Definition}
\newtheorem{example}[theorem]{Example}
\newtheorem{remark}[theorem]{Remark}
\subjclass[2020]{46B20; 46B08; 46B26}
\keywords{Lipschitz retractions, Projectional Skeletons, Plichko}
\author{Antonio J. Guirao}
\address[A. J. Guirao]{Universitat Polit\`ecnica de Val\`encia. Instituto Universitario de Matem\'atica Pura y Aplicada, Camino de Vera, s/n 46022 Valencia (Spain)}
\email{anguisa2@mat.upv.es}
\author{Vicente Montesinos}
\address[V. Montesinos]{Universitat Polit\`ecnica de Val\`encia. Instituto Universitario de Matem\'atica Pura y Aplicada, Camino de Vera, s/n 46022 Valencia (Spain)}
\email{vmontesinos@mat.upv.es}
\author{Andr\'es Quilis}
\address[A. Quilis]{Universitat Polit\`ecnica de Val\`encia. Instituto Universitario de Matem\'atica Pura y Aplicada, Camino de Vera, s/n 46022 Valencia (Spain); and Czech Technical University in Prague, Faulty of Electrical Engineering. Department of Mathematics, Technick\'a 2, 166 27 Praha 6 (Czech Republic)}
\email{anquisan@posgrado.upv.es}
\title[Projectional skeletons and Plichko in Lipschitz-free spaces]{On projectional skeletons and the Plichko property in Lipschitz-free Banach spaces}
\begin{document}
\maketitle
\begin{abstract}
    We study projectional skeletons and the Plichko property in Lipschitz-free spaces, relating these concepts to the geometry of the underlying metric space. Specifically, we identify a metric property that characterizes the Plichko property witnessed by Dirac measures in the associated Lipschitz-free space. We also show that the Lipschitz-free space of all $\mathbb{R}$-trees has the Plichko property witnessed by molecules, and define the concept of retractional trees to generalize this result to a bigger class of metric spaces. Finally, we show that no separable subspace of $\ell_\infty$ containing $c_0$ is an $r$-Lipschitz retract for $r<2$, which implies in particular that $\mathcal{F}(\ell_\infty)$ is not $r$-Plichko for $r<2$.
\end{abstract}
\section{Introduction}

Let $M$ be a complete metric space with a distinguished point $0\in M$. The vector space $\text{Lip}_0(M)=\{f\colon M\rightarrow \mathbb{R}\colon f\text{ is Lipschitz and }f(0)=0\}$ with the norm given by the Lipschitz constant is a Banach space, which is moreover a dual space. Its canonical predual is the subspace $\mathcal{F}(M)=\overline{\text{span}}\{\delta(x)\colon x\in M\}\subset \text{Lip}_0(M)^*$,  where $\delta(x)\colon \text{Lip}_0(M)\rightarrow \mathbb{R}$ is the functional satisfying $\langle f,\delta(x)\rangle=f(x)$ for all $f\in \text{Lip}_0(M)$. This space, introduced by Arens and Eells in \cite{AreEel56}, is also commonly known as the \emph{Lipschitz-free} space associated to $M$, named by Godefroy and Kalton in the seminal article \cite{GodKal03}. The above Dirac map $\delta$ is an isometry embedding, which allows us to regard metric spaces as subsets of their Lipschitz-free space. With this identification, every Lipschitz map $F\colon M\rightarrow N$ between two metric spaces such that $F(0)=0$ can be extended to a linear and bounded operator $\widehat{F}\colon \mathcal{F}(M)\rightarrow Y$ preserving the Lipschitz constant. Since the publication of \cite{GodKal03}, Lipschitz-free spaces have been studied extensively and continue to be a very active area of research in Functional Analysis. We refer to the survey \cite{God15} and the monograph \cite{Wea18Book} for a complete overview of the topic of Lipschitz-free spaces and spaces of Lipschitz functions.

As a continuation of \cite{HajQui22}, in this note, we study the linear projectional structure of Lipschitz-free spaces associated to non-separable metric spaces, which is closely related to the Lipschitz retractional structure of the underlying metric space thanks to the universal extension property that characterizes Lipschitz-free spaces. To this end, we focus on the classical concept of the Plichko property and some related notions.

For a real number $r\geq 1$, a Banach space $X$ is said to be \emph{$r$-Plichko} if there exists a pair $(\Delta,N)$, where $\Delta\subset X$ is a linearly dense subset of $X$ and $N$ is an $r$-norming subspace of $X^*$ such that for every functional $f\in N$, the set
$$ S_\Delta(f)=\{x\in \Delta\colon \langle f,x\rangle \neq 0\}$$
is countable. We say that the pair $(\Delta,N)$ is a \emph{witness} of the Plichko property in $X$. Since $N$ is essentially determined by the linearly dense set $\Delta$, we sometimes say, equivalently, that $X$ is Plichko witnessed by the set
$\Delta$.
 
This concept has been deeply studied in the theory of non-separable Banach spaces. Plichko Banach spaces enjoy geometric and structural properties such as the existence of a LUR renorming, a strong Markushevich basis, or the Separable Complementation Property (SCP). We refer to the survey \cite{Kal00} and the monograph \cite{HajMonVanZiz08} for a complete introduction to this property.

We are also interested in the characterization of the Plichko property in terms of the following concept due to Kubi\'{s}:

\begin{definition}[\cite{Kub09}]
    \label{skeletons}
    Let $X$ be a Banach space. A \emph{projectional skeleton} on $X$ is a family $\{P_s\}_{s\in\Gamma}$ of bounded linear projections on $X$ indexed by a directed partially ordered set $\Gamma$, such that the following conditions hold:
    \begin{itemize}
        \item[(i)] $P_sX$ is separable for all $s\in\Gamma$.
        \item[(ii)] $P_sP_t=P_tP_s=P_s$ whenever $s,t\in \Gamma$ and $s\leq t$.
        \item[(iii)] If $(s_n)_n$ is an increasing sequence of indices in $\Gamma$, then $s=\sup_{n\in\mathbb{N}}s_n$ exists and $P_sX=\overline{\bigcup_{n\in\mathbb{N}}P_{s_n}X}$.
        \item[(iv)] $X=\bigcup_{s\in\Gamma}P_sX$.
    \end{itemize}
    
    If $r\geq 1$, we say that an \emph{$r$-projectional skeleton} is a projectional skeleton where every projection has norm less than or equal to $r$. 
    We say that a projectional skeleton is commutative if $P_sP_t=P_tP_s$ for all $s,t\in\Gamma$, regardless of whether they are comparable or not.
\end{definition}

Indeed, it is proven in \cite{Kub09} that a Banach space is $r$-Plichko if and only if it admits a commutative $r$-projectional skeleton.

We can define an analogous concept to projectional skeletons in metric spaces using Lipschitz retractions:
\begin{definition}
    \label{Lip_skeletons}
Let $M$ be a metric space. A \emph{Lipschitz retractional skeleton} on $M$ is a family $\{R_s\}_{s\in\Gamma}$ of Lipschitz retractions on $M$ indexed by a directed partially ordered set $\Gamma$, such that the following conditions hold:
\begin{itemize}
    \item[(i)] $R_s(M)$ is a separable subset of $M$ for every $s\in \Gamma$.
    \item[(ii)] If $s,t\in \Gamma$ such that $s\leq t$, then $R_s\circ R_t=R_t\circ R_s= R_s$.
    \item[(iii)] If $(s_n)_{n\in\mathbb{N}}$ is a totally ordered sequence in $\Gamma$, then $s=\text{sup}_{n\in\mathbb{N}} s_n$ exists ($\Gamma$ is $\sigma$-complete) and moreover $R_s(M)=\overline{\bigcup_{n\in\mathbb{N}}R_{s_n}(M)}$.
    \item[(iv)] $M=\bigcup_{s\in\Gamma} R_s(M)$.
\end{itemize}
Given $r\geq 1$, an $r$-Lipschitz retractional skeleton is a Lipschitz retractional skeleton with Lipschitz constant uniformly bounded by $r$. A Lipschitz retractional skeleton is \emph{commutative} is $R_s\circ R_t= R_t\circ R_t$ for every $s,t\in \Gamma$, regardless of whether they are comparable or not.
\end{definition}

The linearization property of Lipschitz-free spaces implies that if a metric space admits a (commutative) Lipschitz retractional skeleton, the associated Lipschitz-free space has a (commutative) projectional skeleton. This fact, together with the characterization of the Plichko property in terms of commutative projectional skeletons, shows that the Lipschitz-free space associated to a Plichko space is again Plichko (see Corollary 2.9 in \cite{HajQui22}).

In brief, metric spaces with a rich Lipschitz retractional structure yield Lipschitz-free spaces with a rich linear projectional structure (the converse is not always true, as we discuss in section 5). It is unknown whether there exists a metric space whose Lipschitz-free space does not have Plichko. It is also open whether every Lipschitz-free space has the SCP.

In section 2, we characterize which metric spaces $M$ yield a Lipschitz-free space with the Plichko property witnessed by a subset of $\delta(M)\subset \mathcal{F}(M)$. We show that in this special case, the linear projectional structure of the Lipschitz-free space is strong enough to induce a Lipschitz retractional structure in the underlying space. We offer several equivalent properties, some relating to the structure of the Lipschitz-free space and some relating to the geometry of the underlying metric space. 

In section 3, we study the Lipschitz retractional structure of $\mathbb{R}$-trees (or metric trees). This class of metric spaces has been studied in the context of Lipschitz-free spaces in recent years, as it was shown in \cite{God10} by Godard that the Lipschitz-free space of a complete metric space $M$ is isometric to a subspace of $L_1$ if and only if $M$ is isometric to a subset of a metric tree. See \cite{AliPetPro21} as well for a more recent characterization of metric spaces whose Lipschitz-free space is isometric to a subspace of $\ell_1$ as those which are isometric to subsets of separable $\mathbb{R}$-trees where the length measure of the closure of the set of branching points is $0$. We show in our note that the Lipschitz-free space of every $\mathbb{R}$-tree is $1$-Plichko witnessed by a set of molecules in $\mathcal{F}(M)$. 

In section 4, we introduce the concept of \emph{retractional trees} in metric spaces generalizing the result of the previous section to a bigger class of metric spaces. 

Finally, in section 5, we summarize the relationship between the linear structure of Lipschitz-free spaces associated to Banach spaces and the Lipschitz retractional structure of the underlying space, and we use this connection to show that $\mathcal{F}(\ell_\infty)$ is not $r$-Plichko for $r<2$.

We finish the introduction by setting the notation: Unless stated otherwise, every metric space considered in this note is assumed to be complete and containing a distinguished point $0$. Given a metric space $M$, a point $p\in M$ and a positive number $r>0$, we will denote by $B(p,r)$ the open ball of radius $r$ centered at $p$. Given two different points $p,q\in M$, we will denote by $m_{p,q}$ the (elementary) molecule $\frac{\delta(p)-\delta(q)}{d(p,q)}\in \mathcal{F}(M)$.

\section{Plichko property witnessed by Dirac measures}

Let us introduce the main result of this section:

\begin{theorem}
    \label{equivalencePlichkoDirac}
    Let $M$ be a metric space with distinguished point $0\in M$, and let $\lambda\geq 1$. The following statements are equivalent:

    \begin{itemize}
        \item[(i)] For all $p\in M$ and for all $r<\frac{1}{\lambda}$, the ball $B(p,r\cdot d(p,0))$ is separable. 
        \item[(ii)] $\mathcal{F}(M)$ is $\lambda$-Plichko witnessed by a subset of $\delta(M)$.
        \item[(iii)] $\mathcal{F}(M)$ admits a commutative $\lambda$-projectional skeleton $\{P_s\}_{s\in\Gamma}$ such that $P_s(\delta(p))\in \{0,\delta(p)\}$ for all $p\in M$.
        \item[(iv)] $M$ admits a commutative $\lambda$-Lipschitz retractional skeleton $\{R_s\}_{s\in\Gamma}$ such that $R_s(p)=\{0,p\}$ for all $p\in M$.
        \item[(v)] The closed subspace $\{f\in\text{Lip}_0(M)\colon \text{supp}(f)\text{ is separable}\}$ is a $\lambda$-norming subspace of $\text{Lip}_0(M)$.
    \end{itemize}
\end{theorem}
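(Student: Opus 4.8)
The plan is to prove the cycle of implications $(i)\Rightarrow(iv)\Rightarrow(iii)\Rightarrow(ii)\Rightarrow(v)\Rightarrow(i)$, together with $(iv)\Rightarrow(ii)$ via the linearization property already recorded in the introduction. The geometric condition $(i)$ is the conceptual heart: it says that a ball $B(p,r\cdot d(p,0))$ with $r<1/\lambda$ is ``small'' in the sense of separability, and this is exactly the obstruction to sliding a Lipschitz retraction of norm $\lambda$ that collapses $p$ to $0$. So the main work is $(i)\Rightarrow(iv)$.

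\smallskip

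**Building the skeleton from the geometry ($(i)\Rightarrow(iv)$).** I would index the skeleton by the $\sigma$-complete directed set $\Gamma$ of separable closed subsets $S\subseteq M$ containing $0$ that are \emph{$\lambda$-retractive} in the following sense: there is a $\lambda$-Lipschitz retraction $R_S\colon M\to S$ with $R_S(p)\in\{0,p\}$ for every $p\in M$; equivalently, $S$ determines the partition $M=S\sqcup(M\setminus S)$ and the map sending $M\setminus S$ to $0$ must be $\lambda$-Lipschitz, i.e.\ for every $p\notin S$ and every $q\in S\setminus\{0\}$ one needs $d(p,q)\le \lambda\, d(p,0)$ to fail only when... more precisely the retraction $r_S$ with $r_S|_S=\mathrm{id}$, $r_S(M\setminus S)=\{0\}$ is $\lambda$-Lipschitz iff $d(0,q)\le\lambda\,d(p,q)$ for all $p\notin S$, $q\in S$. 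Condition $(i)$ is precisely what lets us, given any separable $S_0\ni 0$, enlarge it to a \emph{closed separable $\lambda$-retractive} superset: starting from $S_0$, whenever the retraction fails Lipschitzness at some pair $(p,q)$ we are forced to add $p$ to $S$, but the set of such ``bad'' $p$ for a fixed $q$ lies in a ball $B(q, \lambda\, d(p,0))$-type region controlled by $(i)$, hence is separable; iterating $\omega$ times and taking closures yields a separable $\lambda$-retractive set. This gives property $(iv)$: separability of images is built in, the commuting relations $R_S\circ R_T=R_{S\wedge T}$-type identities hold because all retractions collapse onto $\{0,p\}$-valued maps (so they literally commute pointwise, giving commutativity), $\sigma$-completeness holds since an increasing union of separable $\lambda$-retractive sets has separable closure which is again $\lambda$-retractive, and $(iv)$(iv) holds because every point sits in some separable $\lambda$-retractive set by the enlargement argument. \textbf{The main obstacle} is the enlargement lemma: showing that the closure operation and the ``$\omega$-step saturation'' do not destroy separability, which is exactly where the quantitative threshold $r<1/\lambda$ (rather than $r\le 1/\lambda$) is used, since one needs strict inequality to get an open ball one can cover.

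\smallskip

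**The easier implications.** For $(iv)\Rightarrow(iii)$ apply $\widehat{(\cdot)}$ to each $R_s$: by the universal extension property $P_s:=\widehat{R_s}$ is a linear projection of norm $\le\lambda$ on $\mathcal{F}(M)$, the commuting and skeleton axioms transfer directly, and $R_s(p)\in\{0,p\}$ forces $P_s(\delta(p))=\delta(R_s(p))\in\{0,\delta(p)\}$. Then $(iii)\Rightarrow(ii)$: a commutative $\lambda$-projectional skeleton makes $\mathcal{F}(M)$ be $\lambda$-Plichko by Kubi\'s's theorem quoted in the excerpt, with witness $\Delta=\delta(M)$ since the condition $P_s(\delta(p))\in\{0,\delta(p)\}$ guarantees that for each $f$ in the induced norming space $N=\bigcup_s P_s^*(\mathcal{F}(M)^*)$ the set $S_\Delta(f)$ is countable (it is supported on the separable set $P_s(M)$ for the relevant $s$). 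The implication $(ii)\Rightarrow(v)$ is a translation: if $(\delta(D),N)$ witnesses $\lambda$-Plichko, then each $f\in N$ has countable $S_{\delta(D)}(f)=\{x\in D: f(x)\ne 0\}$, and one checks $\mathrm{supp}(f)$ (the smallest closed set outside which $f$ is locally constant in the Lipschitz-free sense, i.e.\ the support as used throughout the theory of $\mathrm{Lip}_0$) is contained in the closure of this countable set, hence separable; the $\lambda$-norming property of $N$ then says precisely that the subspace of separably-supported functions is $\lambda$-norming. Finally $(v)\Rightarrow(i)$ is a direct geometric unwinding: if some ball $B(p, r\, d(p,0))$ with $r<1/\lambda$ were non-separable, one produces a function $f\in\mathrm{Lip}_0(M)$ ``peaking'' at $p$ — roughly $f(x)=\max(0, d(p,0)-\tfrac{1}{r}d(x,p))$ suitably normalized — whose every separably-supported perturbation loses a definite fraction of $\|f\|$ against $\delta(p)$, contradicting $\lambda$-norming because $r\lambda<1$; the inseparable ball is what prevents any separably-supported function from witnessing the norm of $\delta(p)-\delta(0)$.
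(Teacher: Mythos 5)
Your cycle differs from the paper's (which proves $(i)\Leftrightarrow(v)$ via Kalton's lemma and then $(i)\Rightarrow(ii)\Rightarrow(iii)\Rightarrow(iv)\Rightarrow(v)$, leaning on Kubi\'{s}'s Proposition~21 and a result of \cite{CorCutSom22} to upgrade a Plichko witness to a skeleton fixing Dirac measures), and your direct construction in $(i)\Rightarrow(iv)$ --- indexing by closed separable sets $S\ni 0$ for which the map collapsing $M\setminus S$ to $0$ is $\lambda$-Lipschitz, and saturating an arbitrary separable set in $\omega$ steps --- is sound and arguably more self-contained. Two details should be written out: the region of ``bad'' points for a fixed $q\in S$ is $B\bigl(q,\tfrac{1}{\lambda}d(q,0)\bigr)$ (your ``$B(q,\lambda d(p,0))$-type region'' is garbled), which is separable as a countable increasing union of the balls from $(i)$; and the union of these regions over all $q\in S$ coincides with the union over a countable dense subset of $S$, which is what keeps each saturation step separable. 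Your $(iv)\Rightarrow(iii)$, $(ii)\Rightarrow(v)$ and $(v)\Rightarrow(i)$ are fine in outline; the last one is in fact slightly more direct than the paper's, which routes through Kalton's extension lemma.

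The genuine gap is in $(iii)\Rightarrow(ii)$: you take $\Delta=\delta(M)$ and argue that $S_\Delta(f)$ is countable because it ``is supported on the separable set $P_s(M)$''. Separable is not countable. For $f=P_s^*g$ the condition $P_s\delta(p)\in\{0,\delta(p)\}$ only gives $S_{\delta(M)}(f)\subseteq \delta(M)\cap P_s\mathcal{F}(M)$, a separable but typically uncountable set. For instance, for $M=[0,1]$ any nonzero $f\in\mathrm{Lip}_0([0,1])$ is nonzero on a nonempty open, hence uncountable, set, so $(\delta([0,1]),N)$ can never witness the Plichko property with $N$ norming, even though $\mathcal{F}([0,1])$ is trivially $1$-Plichko. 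The definition requires genuine countability, so the witness must be $\delta(D)$ for a dense set $D\subset M$ meeting every separable subset of $M$ in a countable set. Producing such a $D$ is precisely the paper's Zorn's-lemma step (a maximal pairwise disjoint family of nonempty open separable sets, each carrying a countable dense subset), and it requires first extracting from $(iii)$ that every point $p\neq 0$ has a separable neighbourhood --- which does follow, e.g.\ because the induced retractions satisfy $B\bigl(p,\tfrac{1}{\lambda}d(p,0)\bigr)\subseteq R_s(M)$ whenever $R_s(p)=p$ --- but none of this appears in your argument. As written, the implication fails.
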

\begin{proof}
    Write $S_0(M)=\{f\in\text{Lip}_0(M)\colon \text{supp}(f)\text{ is separable}\}$ for the rest of the proof.
    
    Let us start by proving that $(i)$ and $(v)$ are equivalent. Indeed, the proof that $(i)$ implies $(v)$ is already included in \cite[Proposition 2.5]{HajQui22}. To show the converse, suppose by contradiction that $S_0(M)$ is a $\lambda$-norming subspace of $\text{Lip}_0(M)$ and that there exists a point $p_0\in M$ different from $0$ and $0<r_0<\frac{1}{\lambda}$ such that the open ball $B(p_0,r_0\cdot d(p_0,0))$ is non-separable.

    Consider the function $f_0\in \text{Lip}_0(\{0,p_0\})$ defined by $f_0(0)=0$ and $f_0(p_0)=d(p_0,0)$. This function is clearly $1$-Lipschitz, and thus by Kalton's Lemma 3.3 in \cite{Kal04}, choosing $\varepsilon_0>0$ such that $r_0\cdot(\lambda+\varepsilon_0)<1$ there exists a $(\lambda+\varepsilon_0)$-Lipschitz function $g_0$ in $S_0(M)\subset \text{Lip}_0(M)$ such that $g_0(0)=0$ and $g_0(p_0)=d(p_0,0)$. Since we are assuming that $B(p_0,r_0\cdot d(p_0,0))$ is non-separable, there must exist a point $x_0\in M$ with $d(p_0,x_0)< r_0\cdot d(p_0,0)$ such that $g_0(x_0)=0$. However, this implies that
    \begin{align*}
        |g_0(p_0)|&=|g_0(p_0)-g_0(x_0)|\leq (\lambda+\varepsilon_0)d(p_0,x_0)\\
        &<r_0\cdot(\lambda+\varepsilon_0)d(p_0,0)< d(p_0,0),
    \end{align*}
    which contradicts the choice of $g_0$.

    We continue with $(i)$ implies $(ii)$ (This proof is essentially contained in \cite{HajQui22} as well. Still, we include it here for completeness): If a complete metric space $M$ satisfies property $(i)$, we have just proven that the subspace $S_0(M)$ is a $\lambda$-norming subspace of $\text{Lip}_0(M)$. Hence, to show $(ii)$ it is enough to find a dense subset $D$ of $M$ such that the intersection of $D$ with every separable subset of $M$ is countable again. Indeed, it is straightforward to check that in that case, the Lipschitz-free space $\mathcal{F}(M)$ is $\lambda$-Plichko witnessed by $(\delta(D), S_0(M))$.

    To find a dense subset $D$ with this property, we use Zorn's Lemma to obtain a maximal family in the set 
    \begin{align*}
        Z = \big\{\{A_i\}_{i\in I}\colon &A_i\text{ is non-empty, open and separable,}\\
        & \text{and }A_i\cap A_j =\emptyset \text{ for all }i\neq j\in I\big\}.
    \end{align*}
    Denote by $F_0=\{A_i\}_{i\in I}$ such a maximal family in $Z$. Since every point in $M$ different from $0$ has a separable neighborhood, by maximality, we have necessarily that $M=\overline{\bigcup_{i\in I}A_i}\cup \{0\}$. Consider now $D_i$ a countable dense subset of $A_i$ for each $i\in I$. Define $D=\bigcup_{i\in I}D_i$. It is straightforward now to check that $D$ is dense in $M$ and that the intersection of $D$ with any separable subset of $M$ is countable.

    $(ii)$ implies $(iii)$: Write $\Delta\subset \delta(M)$ to denote the set witnessing the $\lambda$-Plichko property in $\mathcal{F}(M)$. Then $S_0(M)$ is countably supported in $\Delta$, so by Proposition 21 in \cite{Kub09}, the Lipschitz-free space $\mathcal{F}(M)$ admits a commutative $\lambda$-projectional skeleton $\{P_s\}_{s\in \Gamma}$ which generates $S_0(M)$. Additionally, since $\Delta$ is a subset of $\delta(M)$ which is linearly dense in $\mathcal{F}(M)$, there exists a dense subset $D$ of $M$ such that $\Delta =\delta(D)$. Now, by Corollary 20 in \cite{CorCutSom22} we may assume that $P_s(\delta(p))\in \{0,\delta(p)\}$ for all $p\in D$ and for all $s\in \Gamma$. By density of $D$ in $M$ and continuity of the projections in the projectional skeleton, we conclude that $P_s(\delta(p))\in \{0,\delta(p)\}$ for all $p\in M$ and all $s\in \Gamma$.

    $(iii)$ implies $(iv)$: Suppose that $\mathcal{F}(M)$ admits a commutative $\lambda$-projectional skeleton $\{P_s\}_{s\in\Gamma}$ such that $P_s(\delta(p))\in \{0,\delta(p)\}$ for all $p\in M$. For every $s\in \Gamma$, the image of the subset $\delta(M)\subset \mathcal{F}(M)$ is contained in $\delta(M)$. Hence, when restricting $P_s$ to the subset $\delta(M)$, we obtain a map $P_{s|\delta(M)}\colon \delta(M)\rightarrow\delta(M)$ which is $\lambda$-Lipschitz. Since the map $\delta\colon M\rightarrow \delta(M)$ is an isometry, this restriction induces a $\lambda$-Lipschitz map 
    $$ R_s\colon M\rightarrow M$$
    for every $s\in \Gamma$, defined by $R_s(p)=\delta^{-1}(P_s(\delta(p)))$. For every $s\in \Gamma$, the map $R_s$ is a retraction onto $\delta^{-1}(P(\delta(M)))$, which is a closed separable subset of $M$. It is direct to check that $\{R_s\}_{s\in \Gamma}$ is a commutative $\lambda$-Lipschitz retractional skeleton in $M$, and it clearly satisfies that $R_s(p)\in \{0,p\}$ for all $p\in M$. 

    $(iv)$ implies $(v)$: Suppose that $M$ admits a commutative $\lambda$-Lipschitz retractional skeleton $\{R_s\}_{s\in\Gamma}$ such that $R_s(p)=\{0,p\}$ for all $p\in M$. We show that $S_0(M)$ is a $\lambda$-norming subspace of $M$ once more by using Kalton's Lemma in \cite{Kal04}. Suppose that $A\subset M$ is a finite set and that $F\in \text{Lip}_0(A)$ is a $1$-Lipschitz function defined in this set. The set $A$ is finite and thus separable, so there exists $s_0\in \Gamma$ such that $S_{s_0}=R_{s_0}(M)$ is a separable set containing $A$. By McShane's extension theorem, we may find another $1$-Lipschitz function $\overline{F}\in \text{Lip}_0(S_{s_0})$ such that $\overline{F}_{|A}=F$. 
    
    Define now $f_{s_0}\colon M\rightarrow \mathbb{R}$ by $f_{s_0}(p)= \overline{F}(R_{s_0}(p))$ for all $p\in M$. The map $f_{s_0}$ is $\lambda$-Lipschitz because $R_{s_0}$ is a $\lambda$-Lipschitz retraction. Since $R_{s_0}(p)=0$ if $p\notin  S_{s_0}$ and $\overline{F}(0)=0$, we obtain that the support of $f_{s_0}$ is contained in the separable subset $S_{s_0}$, which implies that $f_{s_0}$ belongs to $S_0(M)$. We conclude that $S_0(M)$ is $\lambda$-norming by Lemma 3.3 in \cite{Kal04}.

    Since the equivalence between $(i)$ and $(v)$ has already been discussed, we finish the proof of the theorem.
\end{proof}

The geometric condition in $(i)$ of the previous theorem is simple enough to allow us to construct many non-separable metric spaces whose Lipschitz-free spaces have the Plichko property witnessed by Dirac measures. The following example shows that such metric spaces can be found in the Banach spaces $\ell_p(\Gamma)$ for every $1\leq p\leq \infty$ and every uncountable cardinal $\Gamma$.
\begin{example}
\label{ExampleOfPlichkoDeltas1}
    Let $p\in [1,\infty]$ and let $\Gamma$ be uncountable. The Banach space $\ell_p(\Gamma)$ contains a complete metric space $M_p$ of density character $\Gamma$ such that $\mathcal{F}(M_p)$ is $1$-Plichko witnessed by Dirac measures.

    To show this, fix $p\in [1,\infty]$. For each $\gamma\in \Gamma$, denote $E_\gamma =[0,e_\gamma]= \{(x_\nu)_{\nu\in\Gamma}\in \ell_p(\Gamma)\colon x_{\gamma}\in [0,1],~\text{and } x_{\nu}=0~ \text{for }\nu\neq \gamma\}$. Put $M_p = \bigcup_{\gamma\in\Gamma}E_\gamma$. Then $M_p$ is a complete metric space of density character $\Gamma$, and it is easy to check that for every $x\in M_p$ different from $0$, the open ball $B(x,d(x,0))$ is contained in the separable segment $E_\gamma$ such that $x\in E_\gamma$. Hence, $M_p$ satisfies property $(i)$ for $\lambda=1$ in Theorem \ref{equivalencePlichkoDirac} and we conclude that $\mathcal{F}(M_p)$ is $1$-Plichko witnessed by Dirac measures.
\end{example}
In the previous example, the constructed metric space $M_p$ has a stronger property than the geometric condition we demand: Every point $x$ in $M_p$ is contained in a separable subset $E_x$ such that for every point $y\in E_x$ the open ball $B(y,d(y,0))$ is contained in $E_x$, and for every point $z\notin E_x$ the open ball $B(z,d(z,0))$ is contained in $M\setminus E_x$. Informally, these metric spaces can be seen as the union of (uncountably many) open and closed separable components ``glued'' at the distinguished point $0$ in such a way that the distance between points in two different components is always greater than the distance of each individual point to $0$. This fact may suggest that the geometric condition in $(i)$ of Theorem \ref{equivalencePlichkoDirac} is only satisfied by metric spaces that are very close to being separable in the previous sense. However, we show in the next example that this is not the case:

\begin{example}
\label{ExampleOfPlichkoDeltas2}
    There exists a non-separable metric space $N_2$ isometric to a subset of $\ell_2(\Gamma)$ for uncountable $\Gamma$ containing $e_\gamma$ for all $\gamma\in \Gamma$, whose Lipschitz-free space is $1$-Plichko witnessed by Dirac measures with the following property:
    
    For every $\gamma\in \Gamma$ and every separable subset $S_\gamma$ in $N_2$ containing $e_\gamma$, there exists a point $x\in N_2\setminus S$ such that $e_\gamma$ belongs to the open ball $B(x,d(x,0))$. 

    To construct such a space, consider $M_2\subset \ell_2(\Gamma)$ of Example \ref{ExampleOfPlichkoDeltas1}. Define $N_2 = M_2 \cup \{e_\gamma+e_\nu\colon \gamma\neq \nu\in \Gamma\}$. As before, for every $x\in M_2$, the open ball $B(x,d(x,0))$ in $N_2$ is contained in the separable set $E_\gamma$ such that $x\in E_\gamma$. Additionally, for every pair of indices $\gamma\neq \nu\in \Gamma$, we have that $d(e_\gamma+e_\nu,0)=\sqrt{2}$, and thus the open ball $B(e_\gamma+e_\nu,d(e_\gamma+e_\nu,0))$ in $N_2$ is contained in the union of $E_\gamma\cup E_\nu\cup\{e_\gamma+e_\nu\}$, which is separable as well. The metric space $N_2$ satisfies condition $(i)$ with $\lambda=1$ in Theorem \ref{equivalencePlichkoDirac}, and we have that $\mathcal{F}(N_2)$ is $1$-Plichko witnessed by Dirac measures.

    Now, for every $\gamma\in \Gamma$, given any separable subset $S_\gamma$ in $N_2$ containing $e_\gamma$, there exists $\nu\in \Gamma$ such that $e_\gamma+e_\nu$ does not belong to $S_\gamma$ (since that would contradict the separability of $S_\gamma$). It is simple to verify that the point $e_\gamma$ belongs to the open ball $B(e_\gamma+e_\nu,d(e_\gamma+e_\nu,0))$.
\end{example}

In this last example, we may also intuitively identify separable components whose union forms the whole complete metric space (each set of the form $E_\gamma$ or $E_{\gamma,\nu}$ for every $\gamma,\nu\in \Gamma$). However, as we have proven, given a point $p$ in one component, there may be points in different components closer to $p$ than the value $d(p,0)$. Let us formalize this intuition by characterizing metric spaces with the Plichko property witnessed by Dirac measures in terms of their metric structure:

Let $M$ be a complete metric space, and let $\lambda\geq 1$. We say that a collection $\mathcal{S}$ of subsets of $M$ is a \textit{separable $\lambda$-slab decomposition} if for all $N\in \mathcal{S}$, the set $N\setminus\{0\}$ is an open separable set, and for all $p\in N$ there exists a countable subfamily $\mathcal{S}_p\subset \mathcal{S}$ such that $B(p,\lambda\cdot d(p,0))\cap \bigcup_{N\in\mathcal{S}}N$ is contained in $B(p,\lambda\cdot d(p,0))\cap\bigcup_{N\in \mathcal{S}_p} N$.

We say that a separable $\lambda$-slab decomposition is \textit{total} if $M=\overline{\bigcup_{N\in \mathcal{S}} N}$.

\begin{proposition}
\label{metricstructurechar_PlichkoDirac}
Let $M$ be a complete metric space and let $\lambda\geq 1$. The following statements are equivalent:

\begin{itemize}
    \item[(i)] $\mathcal{F}(M)$ is $\lambda$-Plichko witnessed by a subset of $\delta(M)$.
    \item[(ii)] For every separable $\lambda$-slab decomposition $\mathcal{S}$ of $M$ there exists a total separable $\lambda$-slab decomposition $\mathcal{S}'$ such that $\mathcal{S}\subset \mathcal{S}'$.
    \item[(iii)] $M$ admits a total separable $\lambda$-slab decomposition.
\end{itemize}
\end{proposition}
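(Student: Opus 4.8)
The plan is to reduce the statement to the geometric condition of Theorem~\ref{equivalencePlichkoDirac}. Write (G) for the assertion that $B(p,r\cdot d(p,0))$ is separable for every $p\in M$ and every $r<\tfrac1\lambda$; by the equivalence of items (i) and (ii) of Theorem~\ref{equivalencePlichkoDirac}, condition (G) is precisely item (i) of the Proposition, so it suffices to prove the cycle $\mathrm{(G)}\Rightarrow\mathrm{(ii)}\Rightarrow\mathrm{(iii)}\Rightarrow\mathrm{(G)}$. The implication $\mathrm{(ii)}\Rightarrow\mathrm{(iii)}$ comes for free: the empty family is (vacuously) a separable $\lambda$-slab decomposition of $M$, so applying (ii) to it produces a total one.

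For $\mathrm{(G)}\Rightarrow\mathrm{(ii)}$ I would manufacture, out of (G), one canonical total separable $\lambda$-slab decomposition whose slab condition is robust enough to absorb any further slabs. Consider $\mathcal{S}_0=\{\{0\}\}\cup\{B(p,\tfrac1{2\lambda}d(p,0)):p\in M\setminus\{0\}\}$: each ball $B(p,\tfrac1{2\lambda}d(p,0))$ is open, avoids $0$ (because $\lambda\ge1$) and is separable by (G), and $\bigcup\mathcal{S}_0=M$. The fact to exploit is that for any $q\in M$ the ball $B(q,\tfrac1\lambda d(q,0))$ is the countable union of the balls $B(q,(\tfrac1\lambda-\tfrac1n)d(q,0))$, each separable by (G), hence $B(q,\tfrac1\lambda d(q,0))$ is separable and therefore Lindel\"of; so its open cover by the sets $B(x,\tfrac1{2\lambda}d(x,0))$, $x\in B(q,\tfrac1\lambda d(q,0))$, has a countable subcover, i.e.\ countably many members of $\mathcal{S}_0$ already cover all of $B(q,\tfrac1\lambda d(q,0))$. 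Given now an arbitrary separable $\lambda$-slab decomposition $\mathcal{S}$, I would take $\mathcal{S}'=\mathcal{S}\cup\mathcal{S}_0$: it is total, contains $\mathcal{S}$, its members are of the required kind, and since $\bigcup\mathcal{S}'=M$ one has $B(q,\tfrac1\lambda d(q,0))\cap\bigcup\mathcal{S}'=B(q,\tfrac1\lambda d(q,0))$ for every $q$, which by the previous sentence is covered by countably many members of $\mathcal{S}_0\subset\mathcal{S}'$; hence $\mathcal{S}'$ witnesses (ii).

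For $\mathrm{(iii)}\Rightarrow\mathrm{(G)}$, let $\mathcal{T}$ be a total separable $\lambda$-slab decomposition. If $q\in\bigcup\mathcal{T}$, the slab condition exhibits $B(q,\tfrac1\lambda d(q,0))\cap\bigcup\mathcal{T}$ as a countable union of (portions of) separable slabs, hence a separable set; as $\bigcup\mathcal{T}$ is dense in $M$ this set is dense in the open ball $B(q,\tfrac1\lambda d(q,0))$, so $B(q,r\cdot d(q,0))$ is separable for all $r<\tfrac1\lambda$. For a general $q\ne0$ I would choose $p_j\in\bigcup\mathcal{T}$ with $p_j\to q$ and note that, for fixed $r<\tfrac1\lambda$ and $j$ large, $r\cdot d(q,0)+d(q,p_j)<\tfrac1\lambda d(p_j,0)$ — this is where strictness of $r<\tfrac1\lambda$ is used — so $B(q,r\cdot d(q,0))\subset B(p_j,\tfrac1\lambda d(p_j,0))$, which is separable by the previous case. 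This gives (G) and closes the cycle.

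The routine part is verifying that $\mathcal{S}_0$ and $\mathcal{S}'$ meet all clauses of the definition of a separable $\lambda$-slab decomposition. The step I expect to be the real obstacle is engineering $\mathcal{S}_0$ so that it covers \emph{entire} balls $B(q,\tfrac1\lambda d(q,0))$ — not just their intersection with $\bigcup\mathcal{S}_0$ — by countably many of its own members, since that is exactly what keeps the slab condition for $\mathcal{S}'=\mathcal{S}\cup\mathcal{S}_0$ intact no matter which points the arbitrary family $\mathcal{S}$ adds; the Lindel\"of argument is tailored to this. A minor additional subtlety, in $\mathrm{(iii)}\Rightarrow\mathrm{(G)}$, is the treatment of points in $\overline{\bigcup\mathcal{T}}\setminus\bigcup\mathcal{T}$, handled by the approximation estimate.
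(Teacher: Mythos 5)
Your reduction to the geometric condition (G) of Theorem~\ref{equivalencePlichkoDirac} is exactly the paper's strategy, and your arguments for $(ii)\Rightarrow(iii)$ and $(iii)\Rightarrow(\mathrm{G})$ coincide with the paper's (the paper seeds $(ii)$ with the trivial decomposition $\{\{0\}\}$ rather than the empty family, and handles points outside $\bigcup\mathcal{T}$ by the same density-plus-approximation argument). Where you genuinely diverge is in $(\mathrm{G})\Rightarrow(ii)$: the paper applies Zorn's Lemma to the family of all slab decompositions containing $\mathcal{S}$ and shows that a maximal element must be total, whereas you build one explicit total decomposition $\mathcal{S}_0$ out of small balls and observe that $\mathcal{S}\cup\mathcal{S}_0$ already works because the countable-subfamily clause is witnessed entirely inside $\mathcal{S}_0$ via Lindel\"of. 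Your route is more constructive, avoids checking that unions of chains of slab decompositions are again slab decompositions (a point the paper's Zorn argument glosses over), and isolates the real content cleanly: under (G) the relevant balls are separable, hence Lindel\"of, so the countability clause is automatic for any family whose members have open separable nonzero parts. Both proofs ultimately rest on this same fact.

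One discrepancy you must address explicitly: throughout you use the ball $B(p,\frac{1}{\lambda}d(p,0))$ in the slab condition, but the paper's definition of a separable $\lambda$-slab decomposition is printed with $B(p,\lambda\cdot d(p,0))$. With the printed radius your key step fails for $\lambda>1$: condition (G) only yields separability of $B(p,r\,d(p,0))$ for $r<\frac{1}{\lambda}$, so $B(q,\lambda d(q,0))$ need not be separable. Indeed, take $M=\{0\}\cup\{e_\gamma\}_{\gamma\in\Gamma}$ uncountable with all mutual distances equal to $1$: item $(i)$ holds for every $\lambda\geq 1$, yet for $\lambda>1$ one has $B(e_\gamma,\lambda)=M$, which no countable subfamily of countable slabs can cover, so $(i)\not\Rightarrow(iii)$ as literally stated. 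The paper's own proof of $(i)\Rightarrow(ii)$ asserts the same unjustified separability of $B(x,\lambda\cdot d(x,0))$, so the definition is evidently intended to read $\frac{1}{\lambda}d(p,0)$; under that reading your proof is correct, but you should state that you are working with this corrected radius rather than substituting it silently.
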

\begin{proof}
We start by showing that $(i)$ implies $(ii)$. Assume first that $\mathcal{F}(M)$ is $\lambda$-Plichko witnessed by a set $\Delta\subset \delta(M)$, and fix a separable $\lambda$-slab decomposition $\mathcal{S}$. Consider the following family:
$$\Omega := \big\{\mathcal{D}=\{N_i\}_{i\in I}\colon \mathcal{S}\subset \mathcal{D},  ~\mathcal{D}\text{ is a separable $\lambda$-slab decomposition}\big\},  $$
which can be partially ordered by inclusion. Using Zorn's Lemma, we obtain a maximal element $\mathcal{S}'\in \Omega$. Let us prove that $\mathcal{S}'$ is total. 

Indeed, suppose by contradiction that there exists a point $p\in M\setminus\{0\}$ such that $p$ lays outside the closure of the set $\bigcup_{N\in \mathcal{S}'} N$. Then, applying condition $(i)$ of Theorem \ref{equivalencePlichkoDirac} we can find an open and separable set $A_p$ containing $p$ such that $A_p\cap N=\emptyset$ for all $N\in \mathcal{S}'$. Since for every point $x\in A_p$ the ball $B(x,\lambda\cdot d(x,0))$ is separable and $N\setminus\{0\}$ is open for all $N\in \mathcal{S}'$, it follows that $\mathcal{S}_p=\mathcal{S}'\cup\{A_p\cup\{0\}\}$ is a separable $\lambda$-slab decomposition which contains $\mathcal{S}$, contradicting the maximality of $\mathcal{S}'$ in $\Omega$. 

Since $\mathcal{S}=\{0\}$ is trivially a separable $\lambda$-slab decomposition, it is clear that $(ii)$ implies $(iii)$.

Finally, to show that $(iii)$ implies $(i)$, suppose there exists a total separable $\lambda$-slab decomposition $\mathcal{S}$ in $M$. We will prove that condition $(i)$ in Theorem \ref{equivalencePlichkoDirac} holds for the dense subset $\bigcup_{N\in \mathcal{S}}N$, which is enough to prove that $M$ is $\lambda$-Plichko witnessed by Dirac measures. 

Consider a separable set $N\in \mathcal{S}$ and fix $p\in N\setminus\{0\}$. First, since $\mathcal{S}$ is total, it follows that 
$$B(p,\lambda\cdot d(p,0))=\overline{\bigcup_{N\in\mathcal{S}}N\cap B(p,\lambda\cdot d(p,0))}.$$
Next, since $\mathcal{S}$ is a separable $\lambda$-slab decomposition, there exists a countable subfamily $\mathcal{S}_0\subset S$ such that 
$$ B(p,\lambda\cdot d(p,0))= \overline{\bigcup_{N\in\mathcal{S}_0}N\cap B(p,\lambda\cdot d(p,0))}.$$
This shows that $B(p,\lambda\cdot d(p,0))$ is the closure of a countable union of separable sets, thus separable itself. 
\end{proof}

We finish this section by mentioning an example of a complete metric space whose Lipschitz-free space is $1$-Plichko, but which fails to have a subset of Dirac measures witnessing such property: Consider any $\mathbb{R}$-tree with at least two branching points with uncountably many branches at each one of them. Then, whichever choice of the distinguished point we make, there will be a branching point with no separable neighborhood. This implies that condition $(i)$ in Theorem \ref{equivalencePlichkoDirac} does not hold, and the conclusion follows. However, as we will see in the next section, every $\mathbb{R}$-tree does have the $1$-Plichko property, though witnessed by a set of molecules instead. 

\section{The $1$-Plichko property in the Lipschitz-free space of an $\mathbb{R}$-tree}

In this section, we prove that the Lipschitz-free space associated to any $\mathbb{R}$-tree has the $1$-Plichko property witnessed by molecules. 

Let us start by formally defining an $\mathbb{R}$-tree. A metric space $(T,d)$ is an \emph{$\mathbb{R}$-tree} if for every pair of points, $x\neq y\in T$ there exists a unique arc $[x,y]\subset T$, and moreover, the arc $[x,y]$ is isometric to the real line segment $[0,d(x,y)]$. 

A subset of $T$ will be called an \emph{$\mathbb{R}$-subtree} if it is an $\mathbb{R}$-tree which contains the distinguished point $0\in T$ 

Given any $\mathbb{R}$-tree $T$, and given any closed $\mathbb{R}$-subtree $S$ of $T$, it holds that for every point $t\in T$ there exists a unique point $s\in S$ such that the distance from $t$ to $S$ is attained at only the point $s$. This defines a retraction $P_S\colon T\rightarrow S$, where $P_S(t)$ is the unique point in $S$ such that $d(t,S)=d(t,P_S(t))$, called the \emph{metric projection of $T$ onto $S$}. It is known that $P_S$ is a $1$-Lipschitz retraction for every closed $\mathbb{R}$-subtree $S$ of $T$. We will also use the fact that given two closed $\mathbb{R}$-subtrees $A$ and $B$ of $T$ it holds that $P_A\circ P_B =P_B\circ P_A=P_{A\cap B}$. Notice that this implies that given two closed $\mathbb{R}$-subtrees $A$ and $B$ of $T$, and a point $p\in A$, the point $P_B(p)$ belongs to $A\cap B$. 

\begin{theorem}
\label{trees_Plichko_with_molecules}
    Let $T$ be an $\mathbb{R}$-tree. Then $\mathcal{F}(T)$ is $1$-Plichko witnessed by a pair $(\Delta, N)$ where $\Delta$ is a linearly dense set of molecules.
\end{theorem}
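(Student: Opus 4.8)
The plan is to build a commutative $1$-Lipschitz retractional skeleton on $T$ using the metric projections $P_S$ onto closed $\mathbb{R}$-subtrees, and then transfer it to $\mathcal{F}(T)$ via linearization, keeping careful track of what happens to molecules so that one can extract an explicit linearly dense set $\Delta$ of molecules with countable support in the associated norming subspace. The key structural inputs are already recorded in the excerpt: $P_S$ is a $1$-Lipschitz retraction for every closed $\mathbb{R}$-subtree $S$, and $P_A\circ P_B = P_B\circ P_A = P_{A\cap B}$, which immediately gives both the projection identities (ii) and the commutativity condition of Definition~\ref{Lip_skeletons} once the index set is taken to be the family of \emph{separable} closed $\mathbb{R}$-subtrees of $T$ containing $0$, ordered by inclusion. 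First I would check that this family $\Gamma$ is directed and $\sigma$-complete: the closed $\mathbb{R}$-subtree generated by two separable subtrees $A,B$ is the union of all arcs $[a,b]$ with $a\in A$, $b\in B$, which is again separable, giving directedness; and an increasing sequence $S_1\subset S_2\subset\cdots$ has supremum $\overline{\bigcup_n S_n}$, which is separable and an $\mathbb{R}$-subtree, and $P_{\sup}(T) = \overline{\bigcup_n P_{S_n}(T)}$ holds because $P_{S_n}\to P_{\overline{\bigcup S_n}}$ pointwise. Property (iv) is clear since every point lies in the separable subtree $[0,t]$, and (i) is immediate.

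Next I would pass to $\mathcal{F}(T)$. By the universal extension property, each $P_S$ linearizes to a norm-one projection $\widehat{P_S}$ on $\mathcal{F}(T)$, and the identities $\widehat{P_A}\,\widehat{P_B} = \widehat{P_B\circ P_A} = \widehat{P_{A\cap B}}$ show $\{\widehat{P_S}\}_{S\in\Gamma}$ is a commutative $1$-projectional skeleton; hence $\mathcal{F}(T)$ is $1$-Plichko by Kubiś's characterization. The remaining — and genuinely substantive — task is to exhibit a \emph{linearly dense set of molecules} that witnesses the Plichko property. Here I would work with the induced norming subspace $N\subseteq \mathrm{Lip}_0(T)$ generated by the skeleton (equivalently, $\{f : \mathrm{supp}(f)\text{ is contained in a separable subtree}\}$, or the $w^*$-closed span of the $\widehat{P_S}^*$-images), and then choose, for a suitable ``tree-dense'' set of points, molecules $m_{p,q}$ along the arcs. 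Concretely, take a maximal system of points obtained by Zorn's lemma so that the arcs between consecutive chosen points tile $T$ densely — a skeleton of the tree — and let $\Delta$ consist of the molecules $m_{p,q}$ for $p,q$ consecutive in this system; linear density of $\{m_{p,q}\}$ follows from density of the chosen points together with the fact that molecules along a partition of an arc telescope to the molecule of the whole arc.

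The main obstacle I anticipate is the countable-support condition: one must verify that for every $f\in N$ the set $\{m_{p,q}\in\Delta : \langle f, m_{p,q}\rangle\neq 0\}$ is countable. The natural argument is that $f\in N$ has support inside some separable closed subtree $S$, so $f$ factors (up to the relevant identity) through $P_S$; then $\langle f, m_{p,q}\rangle = \langle f, m_{P_S(p),P_S(q)}\rangle$, which forces $m_{p,q}$ to ``matter'' only when the arc $[p,q]$ meets $S$ non-trivially. Since $S$ is separable and the chosen arcs have pairwise disjoint interiors, only countably many of them can overlap $S$ on a set of positive length — this is where separability of $S$ and the tree structure (arcs isometric to real intervals, so ``positive length'' is meaningful and countably additive) do the real work. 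Once this counting is in place, the pair $(\Delta, N)$ is the desired witness. I would also remark that the construction automatically yields, via $\delta^{-1}\circ\widehat{P_S}\circ\delta = P_S$, the Lipschitz retractional skeleton statement, tying the result back to Section~2, though for the theorem as stated only the $(\Delta,N)$ witness is needed.
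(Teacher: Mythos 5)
Your route is genuinely different from the paper's. The paper does not use a maximal family of interior-disjoint arcs: it builds by transfinite induction a hierarchy $\{G_\alpha\}$ of separable closed $\mathbb{R}$-subtrees that forms a tree under inclusion, takes $\Delta=\{m_{p,P_{S(p)}(p)}\}$ where $S(p)$ is the immediate predecessor of the minimal subtree containing $p$, proves linear density by a two-term telescoping and induction on height, and verifies norming and countable support directly via Kalton's lemma and commutativity of the metric projections. Your opening step (the commutative $1$-Lipschitz retractional skeleton indexed by all separable closed $\mathbb{R}$-subtrees, hence $1$-Plichko via Kubi\'{s}) is correct but, as you acknowledge, does not produce a molecular witness. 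Your counting argument is an attractive alternative: if $f=g\circ P_S$ then $\langle f,m_{p,q}\rangle\neq 0$ forces $[p,q]\cap S$ to be a nondegenerate arc, a separable subtree has $\sigma$-finite length measure, and pairwise interior-disjoint arcs can meet such a set in positive length only countably often. But this requires $N$ to consist of functions that actually factor through some $P_S$ (e.g.\ $N=\bigcup_S \widehat{P_S}^*\,\mathrm{Lip}_0(T)$); your parenthetical identification of $N$ with the functions of separable support is not equivalent and breaks the count, since a function vanishing off $S$ need not factor through $P_S$, and uncountably many interior-disjoint arcs can each have one endpoint in $S$.

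The genuine gap is in the construction of $\Delta$ and the proof of its linear density. A Zorn-maximal family of arcs with pairwise disjoint interiors need not cover a given segment $[0,x]$ up to a set of zero length: maximality only forbids inserting a further nondegenerate arc, and the uncovered residue inside $[0,x]$ can be a closed set with empty interior but positive length, exactly as for maximal interior-disjoint families of intervals in $[0,1]$ whose union is the complement of a fat Cantor set. Your telescoping argument identifies $\delta(x)-\delta(0)$ with the indicator $\chi_{[0,d(0,x)]}$ in the $L_1$ picture of $\mathcal{F}([0,x])$ and each molecule with a normalized indicator of its arc; if the chosen arcs miss a subset of $[0,x]$ of length $\varepsilon>0$, every element of $\mathrm{span}(\Delta)$ vanishes a.e.\ on that subset and $\delta(x)$ remains at distance at least $\varepsilon$ from $\overline{\mathrm{span}}(\Delta)$. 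So ``tiling densely'' in the topological sense is not enough, and Zorn's lemma alone does not deliver the measure-theoretic exhaustion the argument needs. This is repairable — one must organize the arcs so that along every segment they exhaust the full length, which is essentially what the paper's transfinite hierarchy accomplishes by attaching each new separable subtree to the previous generation along a single arc $[P_{S_0}(p),p]$ and then densely subdividing — but as written the linear-density step fails.
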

\begin{proof}
    Let $D\subset T$ be a dense set in $T$ such that $D\cap [0,p]$ is countable for every $p\in T$. We will define inductively an increasing sequence of families $\{G_\alpha\}_{\alpha\leq \kappa}$, where $\kappa$ is a big enough cardinal and $G_{\alpha}$ is a family of closed separable $\mathbb{R}$-subtrees of $T$ for every $\alpha\leq\kappa$, such that $T =\bigcup_{S\in G_{\kappa}}S$ and the following properties are satisfied for every $\alpha\leq\kappa$:
    \begin{itemize}
        \item[(a)] For every $H\in G_\alpha$, the family $\{S\in G_\alpha\colon S\subset H\}$ is totally ordered for inclusion. That is: for every $S_1,S_2\in G_\alpha$ with $S_1,S_2\subset H$, either $S_1\subset S_2$ or $S_2\subset S_1$.  
        \item[(b)] $G_\alpha$ is closed for finite intersections. Moreover, for every pair $H_1,H_2\in G_\alpha$ with $H_i\in G_{\beta_i}$ for some $\beta_i\leq \alpha$, and such that $H_1\cap H_2\subsetneq H_i$ for $i=1,2$, there exists $\beta<\min\{\beta_1,\beta_2\}$ such that $H_1\cap H_2\in G_\beta$. In other words, any two incomparable sets in $G_\alpha$ meet at a set which belongs to a strictly lower generation than both sets.
        \item[(c)] For every family $\{S_n\}_{n\in\mathbb{N}}$ of $\mathbb{R}$-subtrees such that $S_n\in G_{\alpha}$ and $S_n\subset S_{n+1}$ for all $n\in\mathbb{N}$, the set $\overline{\bigcup_{n\in\mathbb{N}}S_n}$ is in $G_\alpha$.
    \end{itemize}
    
    Put $G_0 = \left\{\{0\}\right\}$, which trivially satisfies the assumptions. Suppose we have constructed $G_\beta$ for all $\beta$ smaller than a limit ordinal $\alpha$. 
    
    Then put
    $$ G_\alpha = \bigcup_{\beta<\alpha}G_\beta\cup\left\{\overline{\bigcup_{n\in\mathbb{N}} S_n}\colon\text{ where }S_n\in G_{\beta}\text{ for some }\beta_n<\alpha,\text{ and }S_{n}\subset S_{n+1}\text{ for all }n\in\mathbb{N}\right\}.$$
    It is direct that condition $(c)$ holds in this case. Condition $(a)$ follows easily from the inductive hypothesis. 
    
    Finally, while it is clear that $G_\alpha$ is closed for finite intersections, to show the second part of $(b)$, consider two sets $H_1$ and $H_2$ in $G_\alpha$ such that $H_i$ is not a subset of $H_j$ for $i\neq j\in \{1,2\}$. For $i\in\{1,2\}$, we may assume that $H_i = \bigcup_{n\in\mathbb{N}}S^i_n$ with $S^i_n\in G_{\beta^i_n}$ for some $\beta_n^i<\alpha$ and $S^i_n\subset S^i_{n+1}$ for all $n\in\mathbb{N}$. Then, there exists $n_0\in\mathbb{N}$ such that $S^i_{n_0}$ is not a subset of $S^{j}_{n_0}$ for $i\neq j\in\{1,2\}$. Applying $(a)$ to $H_1$, we obtain that $H_1\cap S^2_{n_0}= S^1_{n_0}\cap S^2_{n_0}$. Similarly, we obtain that $H_1\cap H_2 = S^1_{n_0}\cap S^2_{n_0}$. Condition $(b)$ for the pair $S^1_{n_0}$ and $S^2_{n_0}$ yields the desired result.
    
    For the successor ordinal case, assume $G_\alpha$ is defined for a given ordinal $\alpha$. For every $p\in \bigcup_{S\in G_\alpha}S$ we may define the ordinal $\text{height}(p)$ as the least ordinal $\beta\leq \alpha$ such that $p\in H$ for some $H\in G_\beta$. Using minimality, such a set is unique by the second part of condition $(b)$. Hence, we may define $H(p)\in G_{\text{height}(p)}$ as the (unique) smallest $\mathbb{R}$-subtree in $G_\alpha$ containing $p$. 
    
    If $T=\bigcup_{S\in G_\alpha} S$ we stop the inductive process with $\kappa = \alpha$. Otherwise, consider the collection of families of separable $\mathbb{R}$-subtrees given by:
    \begin{align*}
        \Omega_\alpha = \big\{\{ H_i\}_{i\in I}\colon & H_i \text{ is a separable $\mathbb{R}$-subtree and there exists a family }\{S_i\}_{i\in I}\subset G_\alpha\text{ such that}\\
        &\text{for every }i\in I,\text{ the set } S_i\text{ is strictly contained in }H_i\\
        &\text{and }H_i\cap S = S_i\cap S\text{ for all }S\in G_\alpha,\\
        &\text{and such that }H_i\cap H_j = S_i\cap S_j\text{ for }i\neq j\in I\big\}.
    \end{align*} 
    Let us show that there always exists a nonempty set $\mathcal{H}_\alpha\in \Omega_\alpha$. Consider any $p\notin \bigcup_{S\in G_\alpha}S$, and write $P_0 = \sup_{S\in G_\alpha}\{S\cap[0,p]\}$ (here we are taking the maximum with respect to the natural order in $T$ given by: For any $p,q\in T$, we say that $p\leq q$ if $[0,p]\subset [0,q]$). We will prove that there exists an $\mathbb{R}$-subtree $S_0\in G_\alpha$ such that $S_0\cap [0,p]= [0,P_0]$.
    
    Consider a sequence $\{p_n\}_{n\in\mathbb{N}}$ in $\bigcup_{S\in G_\alpha}S$ such that $p_n\leq p_{n+1}$ and $p_n\rightarrow P_0$. The set $H(p_n)\in G_{\text{height}(p_n)}$ is in $G_\alpha$ for every $n\in\mathbb{N}$. Note that $p_n$ belongs to $H(p_m)$ for all $m\geq n$. If there exists $n\in\mathbb{N}$ such that $H(p_n)$ contains every $p_m$ for all $m\geq n$, then the limit point $P_0$ belongs to the closed $\mathbb{R}$-subtree $H(p_n)$, and we obtain that $H(p_n)\cap [0,p]=[0,P_0]$ as desired. 
    
    If not, we can assume by passing to a subsequence that $H(p_m)$ is not a subset of $H(p_n)$ for every pair $n,m\in\mathbb{N}$ with $n<m$. This implies that $H(p_n)$ is a subset of $H(p_m)$ for $n,m\in\mathbb{N}$ with $n<m$, since otherwise the second part of condition $(b)$ shows that $H(p_n)\cap H(p_m)$ belongs to $G_{\beta}$ for some $\beta<\text{height}(p_n)$, which contradicts the minimality of $\text{height}(p_n)$. Hence, applying condition $(c)$ to the increasing sequence $\{H(p_n)\}_{n\in\mathbb{N}}$ yields the sought $\mathbb{R}$-subtree in $G_{\alpha}$.
     
    Once we have an $\mathbb{R}$-subtree $S_0\in G_\alpha$ with $S_0\cap [0,p]= [0,p_0]$, it is straightforward to see that the singleton $\{S_{0}\cup[0,p]\}$ belongs to $\Omega_\alpha$. Choose any nonempty $\mathcal{H}_\alpha\in \Omega_\alpha$ and put $G_{\alpha+1}=G_\alpha\cup \mathcal{H}_\alpha$. 

    It is straightforward to check that $G_{\alpha+1}$ satisfies the inductive hypothesis. Since for every ordinal $\alpha$ the set $\bigcup_{S\in G_{\alpha} }S$ is strictly contained in $\bigcup_{S\in G_{\alpha+1}}S$, there exists an ordinal $\kappa$ such that $T =\bigcup_{S\in G_{\kappa}}S$. This finishes the inductive construction.

    Once the induction is finished, for every $p\in T$ we keep the definition of $\text{height}(p)$ and $H(p)\in G_{\text{height}(p)}$. Now, for every $p\in T$ such that $\text{height}(p)$ is a successor ordinal $\alpha+1$, we wish to define $S(p)\in G_\alpha$ as the unique set in $G_\kappa$ that contains every set $S$ in $G_\kappa$ strictly contained in $H(p)$. In order to show that such a set exists and is unique, consider the family $\{S\in G_\alpha\colon S\subset H(p)\}$, which is well ordered by $(a)$. Since $\overline{\bigcup\{S\in G_\alpha\colon S\subset H(p)\}}$ is contained in the separable set $H(p)$, there exists a sequence $\{S_n\}_{n\in\mathbb{N}}$ such that 
    $$\overline{\bigcup_{n\in\mathbb{N}}S_n}=\overline{\bigcup\{S\in G_\alpha\colon S\subset H(p)\}}.$$
    Condition $(c)$ now implies that the set $S(p)=\overline{\bigcup_{n\in\mathbb{N}}S_n}$ belongs to $G_\alpha$, which clearly has the desired properties.
    
    Denoting by $P_S\colon T\rightarrow S$ the metric projection onto any $\mathbb{R}$-subtree $S$, we consider now the family of molecules given by:
   \begin{align*}
       \Delta = \big\{m_{p,P_{S(p)}(p)}\colon &p\in D \text{ with }H(p)\in G_{\alpha+1}\text{ for some }\alpha<\kappa\big\}.
   \end{align*} 

   We start by showing that $\Delta$ is linearly dense in $\mathcal{F}(T)$. It is enough to prove that $\delta(p)$ belongs to the closed linear span of $\Delta$ for every $p\in T$. We work by induction on the height of $p$. If $\text{height}(p)=0$, then $p=0$ and the conclusion follows trivially. 
   
   Assume that $\text{height}(p)$ is a limit ordinal $\alpha$, and that $\delta(q)\in \overline{\text{span}}(\Delta)$ for all $q\in M$ with $\text{height}(q)<\alpha$. By definition of $G_\alpha$ there exists a sequence of ordinals $\{\beta_n\}_{n\in\mathbb{N}}$ with $\beta_n<\alpha$ for all $n\in\mathbb{N}$, and a sequence of sets $\{S_{\beta_n}\}_{n\in\mathbb{N}}$ with $S_{\beta_n}\in G_{\beta_n}$ such that $H(p)=\overline{\bigcup_{n\in\mathbb{N}} S_{\beta_n}}$. Since $p\in H(p)$, it follows by inductive hypothesis that $p\in \overline{\text{span}}(\Delta)$.

    Assume now that $\text{height}(p)=\alpha+1$ for some ordinal $\alpha$, and that $\delta(q)\in \overline{\text{span}}(\Delta)$ for all $q\in M$ with $\text{height}(q)=\alpha$. Suppose first that $p\in D$. Since $P_{S(p)}(p)$ belongs to $P_{S(p)}(M)$ and $\text{height}(q)=\alpha$ for all $q\in S(p)$, we have by inductive hypothesis that $\delta(P_{S(p)})\in \overline{\text{span}}(\Delta)$. Therefore, we obtain that
    $$\delta(p)=d\left(p,P_{S(p)}(p)\right)m_{p,P_{S(p)}(p)}+\delta(P_{S{p}})\in\overline{\text{span}}(\Delta). $$ 
    Having proven the claim for all $p\in D$, it follows by density of $D$ that $\delta(p)\in \overline{\text{span}}(\Delta)$ for all $p\in M$ with $\text{height}(p)=\alpha+1$. Hence, we conclude that $M=\overline{\text{span}}(\Delta)$. 
   
   Finally, let $N=\{f\in \text{Lip}_0\colon f\text{ is countably supported in }\Delta\}$. We will show that if $C$ is a separable subset of $T$ containing the distinguished point $0$, and $\varphi$ is a $1$-Lipschitz function in $\text{Lip}_0(C)$, there exists a $1$-Lipschitz function $f\in N$ that extends $\varphi$. This implies, by Kalton's Lemma, that $N$ is $1$-norming.

   Consider such $C$ and $\varphi$. Since $C$ is separable, there exists a countable subfamily $\{S_n\}_{n\in\mathbb{N}}$ of $G_\kappa$ such that $C\subset S^*=\bigcup_{n\in\mathbb{N}} S_n$. Using McShane's theorem, we can extend $\varphi$ to a function $F\in\text{Lip}_0(S^*)$ which is $1$-Lipschitz. Now, define $f(p)=F(P_{S^*}(p))$ for all $p\in T$, where $P_{S^*}$ is the metric projection onto the separable $\mathbb{R}$-subtree $S^*$. We will prove that $f$ is finitely supported in $\Delta$ and thus belongs to $N$.

   Let $p\in D$, with $H(p)\in G_{\alpha+1}$ for some ordinal $\alpha<\kappa$. Suppose first that $H(p)\nsubseteq S^*$. Then $P_{S^*}(p)\neq p$. By commutativity of the metric projections, we have that $P_{S^*}(p)\in H(P_{S^*}(p))\cap H(p)$. This implies that $H(P_{S^*}(p))$ is contained in $H(p)$, and the inclusion is strict since $H(P_{S^*}(p))\subset S^*$. We conclude that $H(P_{S^*}(p))\subset S(p)$. Therefore, again by commutativity of the metric projections, we obtain that $P_{S^*}(P_{S(p)}(p))=P_{S^*}(p)$, which implies that $\langle f, m_{p, P_{S(p)}(p)}\rangle=0$.

    Now, if $H(p)\subset S^*$, then $p\in S^*$. Since $S^*$ is separable and $D\cap S_n$ is countable for all $n\in\mathbb{N}$, the set $\{p\in D\colon H(p)\subseteq S^*\}$ is countable. We conclude that $f$ is countably supported in $\Delta$. Hence, $f\in N$ and we obtain that $N$ is $1$-norming.
\end{proof}

\section{Lipschitz-retractional trees}

We will generalize the previous ideas in the proof of Theorem \ref{trees_Plichko_with_molecules} to obtain the same result for a bigger class of metric spaces. To define this concept, we will use the notion of \emph{tree} in the partially ordered sense: A partially ordered set $(\Gamma,\leq )$ is called a \emph{tree} if for every element $s\in \Gamma$, the set $\{t\in \Gamma\colon t<s\}$ is a well-ordered set. A tree is said to be \emph{rooted} if there is a minimum element in $\Gamma$, called the \emph{root} and usually denoted by $0$, and is called \emph{$\sigma$-complete} if every totally ordered countable subset of $\Gamma$ has a supremum in $\Gamma$. 

For any element $s$ in a tree $\Gamma$, we call $I_s=\{t\in\Gamma\colon t<s\}$ the initial segment generated by $s$. Notice that $I_s$ is well ordered for every $s\in \Gamma$ and is, therefore, order isomorphic to an ordinal number, which we will refer to as the height of $s$. The height of a tree $\Gamma$ is the supremum over the height of all elements of $\Gamma$. 

In a $\sigma$-complete tree, given two points $s$ and $t$ of countable height, it is direct to show that there exists a unique $s\wedge t\in \Gamma$, defined as the supremum in $\Gamma$ over all points which are lower bounds of both $s$ and $t$. 

%A subset $\Gamma'$ of a tree $\Gamma$ is called a subtree if it is closed for initial segments. Notice that any subtree of a (rooted) tree is a (rooted) tree as well, and the root point is preserved. Given a subset

Let $M$ be a complete metric space. A family of $\lambda$-Lipschitz retractions $\{R_s\}_{s\in\Gamma}$ on $M$ indexed by a rooted, $\sigma$-complete tree $\Gamma$ such that $\text{height}(s)<\omega_1$ for all $s\in \Gamma$ is called a $\lambda$-\emph{Lipschitz pre-retractional tree} if the following properties are satisfied:

\begin{itemize}
        \item[(i)] $R_0(M)=\{0\}$ and $R_s(M)$ is a separable subset of $M$ for every $s\in \Gamma$.
        \item[(ii)] For every $s,t\in \Gamma$ we have that $R_s\circ R_t= R_t\circ R_s=R_{s\wedge t}$.
        \item[(iii)] If $(s_n)_{n\in\mathbb{N}}$ is a totally ordered sequence in $\Gamma$ with $s=\sup_{n\in\mathbb{N}}s_n$, then $R_s(M)=\overline{\bigcup_{n\in\mathbb{N}}R_{s_n}(M)}$.
        \item[(iv)] $M=\bigcup_{n\in\mathbb{N}}R_s(M)$.   
\end{itemize}

Notice that if $M$ admits a Lipschitz pre-retractional tree $\{R_s\}_{s\in\Gamma}$ such that $\text{height}(s)<\omega_1$ for all $s\in \Gamma$, given a point $p\in M$ we can find an element $s(p)\in \Gamma$ such that $p\in R_{s(p)}(M)$ and such that $s(p)$ is minimal in $\Gamma$ for this property. Moreover, condition $(ii)$ ensures that $s(p)$ is unique. This allows us to define the height of a point $p$ in a Lipschitz pre-retractional tree $\{R_s\}_{s\in\Gamma}$ as the height of $s(p)$ in the tree $\Gamma$.

Now, given a $\sigma$-complete subset $A\subset \Gamma$, we may define a map $R_A\colon M\rightarrow \bigcup_{s\in A}R_s(M)$ given by $R_A(p)=R_{s(p)\wedge A}(p)$. Condition $(ii)$ ensures that this map is well defined, and it clearly is a retraction. We can finally define the structure we will be studying in this section:
\begin{definition}
    Let $M$ be a complete metric space. A family of $\lambda$-Lipschitz retractions $\{R_s\}_{s\in\Gamma}$ on $M$ indexed by a rooted, $\sigma$-complete tree $\Gamma$ such that $\text{height}(s)<\omega_1$ for all $s\in \Gamma$ is called a $\lambda$-\emph{Lipschitz retractional tree} if $\{R_s\}_{s\in\Gamma}$ is a $\lambda$-Lipschitz pre-retractional tree which additionally satisfies the following condition:
    \begin{itemize}
        \item[(v)] For every $\sigma$-complete countable set $A$ of $\Gamma$, the set $R_A(M)$ is closed and the retraction $R_A$ is $\lambda$-Lipschitz.
    \end{itemize}
\end{definition}

In the proof of Theorem \ref{trees_Plichko_with_molecules} we construct a similar structure to a $1$-Lipschitz retractional tree in every $\mathbb{R}$-tree. We are going to show that this construction is enough to obtain the Plichko property in the Lipschitz-free space witnessed by molecules and that it can be applied to metric spaces which are not isometric to an $\mathbb{R}$-tree.

Let us prove first one technical lemma:

\begin{lemma}
    \label{dense_count_tree}
    Let $M$ be a metric space. Suppose $M$ admits a Lipschitz retractional tree $\{R_s\}_{s\in\Gamma}$. Then there exists a dense set $D$ in $M$ such that $D\cap R_s(M)$ is countable for every $s\in\Gamma$.
\end{lemma}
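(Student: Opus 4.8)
The plan is to build a coherent family of countable sets by transfinite induction on the height of the nodes of $\Gamma$. Concretely, I will construct for every $s\in\Gamma$ a countable set $D_s\subseteq R_s(M)$ such that
\begin{itemize}
\item[(1)] $D_s$ is dense in $R_s(M)$;
\item[(2)] $D_u\subseteq D_s$ whenever $u\leq s$;
\item[(3)] $D_s\cap R_u(M)=D_u$ whenever $u\leq s$;
\end{itemize}
and then take $D:=\bigcup_{s\in\Gamma}D_s$. Two preliminary observations make this meaningful: since $\text{height}(s)<\omega_1$ for all $s$, each initial segment $I_s$ is countable, so every $D_s$ produced below will be a countable union of countable sets, hence countable; and if $u\leq s$ then property (ii) gives $R_s\circ R_u=R_u$, so $R_u(M)=R_s(R_u(M))\subseteq R_s(M)$, which justifies the inclusions and intersections in (2)--(3). (Only properties (i)--(iv) will be used.)

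The induction runs as follows. At the root, $R_0(M)=\{0\}$, so set $D_0=\{0\}$. If $s$ has limit height, then $I_s$ is a countable well-ordered set with no maximum, hence of cofinality $\omega$; choosing a strictly increasing cofinal sequence $(t_n)_n$ in $I_s$, one checks using $\sigma$-completeness of $\Gamma$ that $s=\sup_n t_n$ (if $\sup_n t_n$ were $<s$ it would be the maximum of $I_s$), so property (iii) yields $R_s(M)=\overline{\bigcup_n R_{t_n}(M)}$; we then put $D_s=\bigcup_{u<s}D_u$, and (1)--(3) for $s$ follow from the inductive hypotheses together with the fact that $I_s$ is linearly ordered. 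If $s$ has successor height, write $s^-$ for the maximum of $I_s$, so that $\bigcup_{u<s}D_u=D_{s^-}$; here $R_{s^-}(M)$ is closed in the separable space $R_s(M)$ (it is the fixed-point set of the continuous map $R_{s^-}$), hence $R_s(M)\setminus R_{s^-}(M)$ is separable and we may pick a countable dense subset $E$ of it, and set $D_s=D_{s^-}\cup E$. Then $D_s$ is dense in $R_s(M)$ (as $D_{s^-}$ is dense in $R_{s^-}(M)$ and $E$ is dense in the complement), (2) is clear, and for (3) any $u\leq s^-$ has $R_u(M)\subseteq R_{s^-}(M)$, so $E\cap R_u(M)=\emptyset$ and thus $D_s\cap R_u(M)=D_{s^-}\cap R_u(M)=D_u$ by the inductive hypothesis.

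It remains to verify that $D=\bigcup_{s\in\Gamma}D_s$ works. Density of $D$ in $M$ is immediate, since each $D_s$ is dense in $R_s(M)$ and $M=\bigcup_{s\in\Gamma}R_s(M)$ by (iv). For the countability, fix $t\in\Gamma$ and take $x\in D\cap R_t(M)$, say $x\in D_s$. Then $R_s(x)=x=R_t(x)$, so property (ii) gives $x=R_t(R_s(x))=R_{s\wedge t}(x)$, i.e. $x\in R_{s\wedge t}(M)$; since $s\wedge t\leq s$, property (3) gives $x\in D_{s\wedge t}$, and since $s\wedge t\leq t$, property (2) gives $x\in D_t$. Hence $D\cap R_t(M)=D_t$, which is countable.

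The step that needs genuine care is the successor case: one must enlarge $D_{s^-}$ so that it becomes dense in the possibly strictly larger separable set $R_s(M)$, without adding any point lying in some $R_u(M)$ with $u\leq s^-$ — which is exactly why the new points are drawn from $R_s(M)\setminus R_{s^-}(M)$, and why it matters that $R_{s^-}(M)$ is closed. The verification that a limit node is the supremum of a cofinal $\omega$-sequence below it, so that property (iii) is applicable, is the other place where the tree structure must be handled with a little attention.
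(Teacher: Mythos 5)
Your proof is correct and follows essentially the same route as the paper's: a transfinite induction on height that at successor stages adjoins a countable dense subset of the newly appearing part of each $R_s(M)$, disjoint from everything already collected, and at limit stages takes unions justified by property (iii). The only cosmetic differences are that you index the construction by nodes of $\Gamma$ with an explicit coherence condition and carve out $R_{s^-}(M)$ rather than $\overline{D_\eta}$, whereas the paper indexes by ordinal levels; the final countability check via $s\wedge t$ and property (ii) is the same in both.
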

\begin{proof}
    Let $\kappa$ be the height of the tree $\Gamma$. We are going to define by transfinite induction a family of sets $\{D_\alpha\}_{\alpha\leq \kappa}$ with the following properties for every $\alpha<\kappa$:
    \begin{itemize}
        \item[(1).] $D_\alpha$ contains $D_\beta$ for all $\beta<\alpha$.
        \item[(2).] $D_\alpha\cap R_s(M)$ is a countable dense subset of $R_s(M)$ for every $s\in\Gamma$ such that $\text{height}(s)\leq \alpha$.  
    \end{itemize}
    It is clear that once we construct such a family, the set $D=D_\kappa$ will satisfy the conditions of the lemma.

    Put $D_0=\{0\}$. Suppose we have constructed $\{D_\beta\}_{\beta<\alpha}$ for some ordinal $\alpha<\kappa$. If $\alpha$ is a limit ordinal, simply define $D_\alpha=\bigcup_{\beta<\alpha} D_\beta$. It follows from property $(iii)$ of the Lipschitz retractional tree that $D_\alpha$ satisfies condition $(2)$ of the inductive process. 

    If $\alpha$ is not a limit ordinal, then $\alpha=\eta+1$ for some ordinal $\eta<\kappa$. For each $s\in\Gamma$ such that $\text{height}(s)=\eta+1$, consider a countable dense subset $D_s$ of $R_s(M)\setminus \overline{D_\eta}$. Finally, Put
    $$ D_{\eta+1}= D_\eta\cup\bigg(\bigcup \{D_s\colon s\in\Gamma\text{ and }\text{height}(s)=\eta+1\}\bigg).$$
    Clearly, condition $(1)$ is satisfied, and condition $(2)$ follows from the definition of each $D_s$ and condition $(ii)$ in the definition of Lipschitz pre-retractional trees.
\end{proof}

We can now prove the main theorem of the section.
\begin{theorem}
    Let $M$ be a complete metric space. If $M$ admits a $\lambda$-Lipschitz retractional tree, then $\mathcal{F}(M)$ is $\lambda$-Plichko witnessed by a pair $(\Delta,N)$, where $\Delta$ is a subset of molecules of $M$. 
\end{theorem}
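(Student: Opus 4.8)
The plan is to mimic the proof of Theorem~\ref{trees_Plichko_with_molecules}, replacing the metric-projection machinery of $\mathbb{R}$-trees with the abstract retractions $R_s$ supplied by the $\lambda$-Lipschitz retractional tree. First I would fix a dense set $D\subset M$ as provided by Lemma~\ref{dense_count_tree}, so that $D\cap R_s(M)$ is countable for every $s\in\Gamma$. For every $p\in M$ we have the minimal index $s(p)\in\Gamma$ with $p\in R_{s(p)}(M)$; write $\mathrm{height}(p)=\mathrm{height}(s(p))$. For a point $p$ whose height is a successor ordinal $\eta+1$, the initial segment $I_{s(p)}$ is countable (as $\mathrm{height}(s(p))<\omega_1$) and $\sigma$-complete, so I would set $S(p):=R_{I_{s(p)}}(M)$, the ``parent'' separable retract; by property~(v) this is a closed separable set, $R_{I_{s(p)}}$ is $\lambda$-Lipschitz, and by~(ii) we have $R_{I_{s(p)}}(p)\in S(p)$ with every point of $S(p)$ of height $\le\eta$. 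The candidate witnessing set of molecules is
\[
\Delta=\bigl\{\, m_{p,\,R_{I_{s(p)}}(p)} \;:\; p\in D,\ \mathrm{height}(s(p))\text{ is a successor ordinal},\ p\neq R_{I_{s(p)}}(p)\,\bigr\},
\]
together with $N=\{f\in\mathrm{Lip}_0(M): f\text{ is countably supported in }\Delta\}$.

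The linear density of $\Delta$ I would prove by transfinite induction on $\mathrm{height}(p)$ exactly as in Theorem~\ref{trees_Plichko_with_molecules}: height $0$ gives $p=0$; at a limit ordinal $\alpha$, property~(iii) writes $R_{s(p)}(M)$ as the closure of an increasing countable union of $R_{s_n}(M)$ with $\mathrm{height}(s_n)<\alpha$, and since $p\in R_{s(p)}(M)$ it lies in $\overline{\mathrm{span}}(\Delta)$ by the inductive hypothesis and continuity; at a successor $\eta+1$ with $p\in D$, the identity
\[
\delta(p)=d\bigl(p,R_{I_{s(p)}}(p)\bigr)\, m_{p,R_{I_{s(p)}}(p)}+\delta\bigl(R_{I_{s(p)}}(p)\bigr)
\]
puts $\delta(p)$ in $\overline{\mathrm{span}}(\Delta)$ because $\delta(R_{I_{s(p)}}(p))\in\overline{\mathrm{span}}(\Delta)$ by induction (that point has height $\le\eta$); density of $D$ then handles general $p$ of successor height, and together the cases give $\mathcal{F}(M)=\overline{\mathrm{span}}(\Delta)$.

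For the norming property I would again invoke Kalton's Lemma~3.3 in \cite{Kal04}: given a separable $C\subset M$ with $0\in C$ and a $1$-Lipschitz $\varphi\in\mathrm{Lip}_0(C)$, by property~(iv) and separability there is a countable totally ordered-closure argument producing a countable $\sigma$-complete $A\subset\Gamma$ with $C\subset R_A(M)=:S^*$, a closed separable retract with $R_A$ $\lambda$-Lipschitz by~(v). Extend $\varphi$ by McShane to a $1$-Lipschitz $F\in\mathrm{Lip}_0(S^*)$ and set $f(p)=F(R_A(p))$, which is $\lambda$-Lipschitz. One checks $f$ is countably supported in $\Delta$: if $p\in D$ has successor height and $R_{I_{s(p)}}(M)\subset S^*$ then $p\in S^*$, and there are only countably many such $p$ since $D\cap S^*$ is countable (being contained in a countable union of $D\cap R_{s_n}(M)$); if instead $R_{I_{s(p)}}(M)\not\subset S^*$, then by the commutation relation~(ii) one shows $R_A\bigl(R_{I_{s(p)}}(p)\bigr)=R_A(p)$, whence $\langle f, m_{p,R_{I_{s(p)}}(p)}\rangle=0$. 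This makes $f\in N$, so $N$ is $\lambda$-norming, and $(\Delta,N)$ witnesses that $\mathcal{F}(M)$ is $\lambda$-Plichko.

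The main obstacle I anticipate is the bookkeeping in the ``vanishing'' case of the norming step: establishing the key equality $R_A\circ R_{I_{s(p)}} = R_A$ on the relevant points purely from axiom~(ii), i.e.\ translating the $\mathbb{R}$-tree fact ``$P_B(p)\in H(p)$ forces $H(P_B(p))\subsetneq H(p)$, hence $H(P_B(p))\subset S(p)$'' into the language of $s(\cdot)$, $\wedge$, and initial segments. Concretely one must argue that if $R_A(p)\neq p$ then $s(R_A(p)) = s(p)\wedge A < s(p)$, so $s(R_A(p))\in I_{s(p)}$, and therefore $R_A\bigl(R_{I_{s(p)}}(p)\bigr)=R_{s(R_{I_{s(p)}}(p))\wedge A}(p) = R_{s(p)\wedge A}(p)=R_A(p)$; verifying these identities carefully using only the stated axioms is the delicate point, but it is a routine unwinding once the right formulation is found.
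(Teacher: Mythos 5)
Your proposal is correct and follows essentially the same route as the paper: for $p$ of successor height the initial segment $I_{s(p)}$ has a maximum $t(p)$, so your molecule $m_{p,R_{I_{s(p)}}(p)}$ is exactly the paper's $m_{p,R_{t(p)}(p)}$, and the dense set from Lemma~\ref{dense_count_tree}, the transfinite density induction, and the norming step via McShane and $f=F\circ R_A$ all coincide with the paper's proof. One slip needs fixing in the countable-support step: the dichotomy ``$R_{I_{s(p)}}(M)\subset S^*$ versus not'' is the wrong one, since $R_{I_{s(p)}}(M)\subset S^*$ does \emph{not} imply $p\in S^*$ (already $I_{s(p)}=\{0\}$ gives $R_{I_{s(p)}}(M)=\{0\}\subset S^*$ for every such $p$), so the two cases as stated leave uncovered the points $p\notin S^*$ whose parent retract happens to lie in $S^*$. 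The correct split --- which is the one your final paragraph actually computes with, and the one the paper uses --- is on whether $s(p)\wedge A=s(p)$ (equivalently $R_A(p)=p$): if so, $p\in D\cap S^*$, which is countable by the choice of $D$; if not, then $s(p)\wedge A\leq t(p)$, axiom (ii) yields $R_A\bigl(R_{t(p)}(p)\bigr)=R_A(p)$, and the molecule is annihilated by $f$. With that substitution your argument is the paper's argument.
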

\begin{proof}
    Let $\{R_s\}_{s\in\Gamma}$ be a $\lambda$-Lipschitz retractional tree. Using Lemma \ref{dense_count_tree}, choose $D$ to be a dense subset of $M$ such that $D\cap R_{s}(M)$ is countable for every $s\in\Gamma$. 

    Let $\kappa$ be the height of the tree $\Gamma$. Fix a point $p\in D$ such that $\text{height}(p)$ is $\alpha+1$ for some ordinal number $\alpha<\kappa$. Since the initial segment $I_{s(p)}$ is order isomorphic to the ordinal $\alpha+1$, there exists a unique element $t(p)$ in $I_{s(p)}$ with $\text{height}(t(p))=\alpha$. Define now
    $$\Delta =\bigcup \{m_{p,R_{t(p)}(p)}\colon p\in D, ~\text{height}(p)=\alpha+1\text{ for some }\alpha<\kappa\}. $$
    Notice that $\Delta$ is a subset of molecules of $M$. 
    
    With a similar inductive reasoning as in the proof of Theorem \ref{trees_Plichko_with_molecules}, we obtain that $\Delta$ is linearly dense in $\mathcal{F}(M)$.

    Finally, we show that the set $N=\{f\in\text{Lip}_0(M)\colon f\text{ is countably supported in }\Delta\}$ is $\lambda$-norming. We will show that if $S$ is a separable subset of $M$ containing the distinguished point $0$, and $g$ is a $1$-Lipschitz function in $\text{Lip}_0(S)$, there exists a $\lambda$-Lipschitz function $f\in N$ that extends $g$. 

    Consider $S$ and $g$ with such properties. Since $S$ is separable and $M=\bigcup_{s\in\Gamma}R_s(M)$, there exists a $\sigma$-complete and countable subset $A$ of $\Gamma$ such that $S\subset \bigcup_{s\in A}R_s(A)$. Define $f\in\text{Lip}_0(M)$ as $f= g\circ R_A$. Since $R_A$ is $\lambda$-Lipschitz, it follows that $\|f\|_\text{Lip}\leq \lambda$, and clearly $f$ extends $g$ since $R_A$ is a retraction onto $\bigcup_{s\in A}R_s(A)$. It remains to show that $f$ is countably supported in $\Delta$. 
    
    To this end, we will show that for any point $p\in D$ such that $\text{height}(p)$ is a successor ordinal and such that $s(p)\notin A$, we have that $\langle f, m_{p, R_{t(p)}(p)}\rangle =0$. In this situation, since $s(p)\notin A$, we have that $t(p)$ is greater or equal than $s(p)\wedge A$, and thus it follows that $t(p)\wedge A=s(p)\wedge A$ and $R_A(p)=R_A(R_{t(p)}(p))$. This implies that $f(p)=f(R_{t(p)}(p))$, which finishes the proof.
\end{proof}

We finish with an example of a non-separable metric space that is not isometric to an $\mathbb{R}$-tree though it admits a $1$-Lipschitz retractional tree. 
\begin{example}
    Let $\kappa$ be an uncountable set. Let $M=\{0,1\}\cup\{(t,\alpha)\colon t\in (0,1),~\alpha\in \kappa\}$, endowed with the unique distance $d\colon M\times M\rightarrow [0,\infty)$ satisfying
    $$ 
    d(p,q)=
    \begin{cases}
    1,&\text{ if }p=0,~q=1\\
    t,&\text{ if }p=0,~q=(t,\alpha),\text{ for some }\alpha\in \kappa\\
    1-t,&\text{ if }p=1,~q=(t,\alpha),\text{ for some }\alpha\in \kappa\\
    |t-s|,&\text{ if }p=(t,\alpha),~q=(s,\alpha),\text{ for some }\alpha\in \kappa\\
    \min\{d(p,0)+d(0,q),d(p,1)+d(1,q)\},&\text{ otherwise}.
    \end{cases}
    $$
    It is clear that $M$ is non-separable and is not isometric to an $\mathbb{R}$-tree. For every $\alpha\in\kappa$, write $M_\alpha =\{0,1\}\cup\{(t,\alpha)\colon t\in (0,1),~\alpha\in \kappa\}$. 
    
    Fix any $\alpha_0\in \kappa$, and define $\Gamma = \{0\}\cup\kappa$, with the order $\preceq$ given by
    \begin{align*}
        0\prec \alpha\qquad &\text{for every }\alpha\in \kappa\\
        \alpha\prec \beta\qquad &\text{for }\alpha,\beta\in\kappa\text{ if and only if }\alpha = \alpha_0\text{ and }\beta\neq\alpha_0. 
    \end{align*}
    
    Clearly $\left(\Gamma,\preceq\right)$ is a rooted, $\sigma$-complete tree with finite height. Define $R_{0}\colon M\rightarrow \{0\}$ as the trivial retraction, and $R_{\alpha_0}\colon M\rightarrow M_{\alpha_0}$ as the identity map on $M_{\alpha_0}$ and $R_{\alpha_0}(t,\alpha)=(t,\alpha_0)$ for every $t\in(0,1)$ and $\alpha\in \kappa\setminus{\alpha_0}$. Finally, for every $\alpha\in \kappa\setminus\{\alpha_0\}$, define $R_{\alpha}\colon M\rightarrow M_{\alpha_0}\cup M_{\alpha}$ as the identity on $M_{\alpha_0}\cup M_{\alpha}$, and $R_{\alpha_0}(t,\alpha)=(t,\alpha_0)$ for every $t\in(0,1)$ and $\alpha\in \kappa\setminus\{\alpha_0,\alpha\}$. 

    It is now routine to check that $\{R_s\}_{s\in\Gamma}$ is indeed a $1$-Lipschitz retractional tree. 
    
\end{example}

\section{Linear structure of Lipschitz-free spaces associated to Banach spaces}

As mentioned throughout this note, it is a standard application of the linearization property of Lipschitz-free spaces that if $M$ is a complete metric space and $R\colon M\rightarrow S$ is a Lipschitz retraction onto a subset $S$ of $M$, there exists a bounded linear projection $\widehat{R}\colon \mathcal{F}(M)\rightarrow \mathcal{F}(S)$ between the associated Lipschitz-free spaces. It is also not difficult to show that not all bounded and linear projections in Lipschitz-free spaces are obtained in this way: a bounded and linear projection $P\colon \mathcal{F}(M)\rightarrow \mathcal{F}(S)$ is of the form $P=\widehat{R}$ for some Lipschitz retraction $R\colon M\rightarrow S$ if and only if $P(\delta(p))\in \delta(S)$ for all $p\in M$. 

However, if $X$ is a Banach space, we are able to define the \emph{barycenter} map $\beta_X\colon \mathcal{F}(X)\rightarrow X$, which is a linear map with $\|\beta_X\|\leq 1$ such that $\beta_X(\delta(x))=x$ for all $x\in X$ (see e.g.: \cite{GodKal03}). By composing with the isometry $\delta$, we obtain that every bounded and linear projection $P\colon \mathcal{F}(X)\rightarrow \mathcal{F}(Y)$ where $Y$ is a linear subspace of $X$ induces a Lipschitz retraction $R=\beta_Y\circ P\circ \delta\colon X\rightarrow Y$ with $\|R\|_\text{Lip}=\|P\|$. 

If $\mathcal{F}(X)$ is $r$-Plichko, then it admits in particular an $r$-projectional skeleton $\{P_s\}_{s\in \Gamma}$. By Proposition 2.2 in \cite{HajQui22}, we may assume that $P_s(\mathcal{F}(X))=\mathcal{F}(Y_s)$, where $Y_s$ is a separable subspace of $X$ for every $s\in \Gamma$. Therefore, using the definition of $r$-projectional skeleton and the previous discussion we arrive at the following observation:

\begin{remark}
\label{Plichko_implies_LSRP}
If $\mathcal{F}(X)$ is $r$-Plichko for some $r\geq 1$, every separable subspace of $X$ is contained in a separable subspace which is an $r$-Lipschitz retract of $X$. 
\end{remark}

Recall that a Banach space has the \emph{Separable Complementation Property (SCP)} if every separable subspace of $X$ is contained in a separable subspace that is linearly complemented in $X$. We say that $X$ has the $r$-SCP for $r\geq 1$ if the norm of the projections of the separable superspaces can be uniformly bounded by $r$. Analogously, we say that a metric space $M$ has the \emph{Lipschitz Separable Retractional Property (Lipschitz SRP)} if every separable subset of $M$ is contained in a separable subset which is a Lipschitz retract of $M$. For $r\geq 1$, we say that $M$ has the $r$-Lipschitz SRP if the norm of the Lipschitz retractions onto the separable supersets can be uniformly bounded by $r$.

Let us note that in the metric space setting, Remark \ref{Plichko_implies_LSRP} fails in a strong sense, as there exists a metric space $M$ such that $\mathcal{F}(M)$ is $1$-Plichko, but with two specific points $\{0,1\}\subset M$ such that every separable set containing them is not a Lipschitz retract of $M$ (see Theorem 3.7 and Remark 3.8 in \cite{HajQui22}).

Remark \ref{Plichko_implies_LSRP} suggests that a strategy for finding a Lipschitz-free space without the Plichko property could be to construct a Banach space failing the Lipschitz SRP. This is a difficult problem, which implies the existence of a separable Banach space that is not a Lipschitz retraction of its bidual; a problem that goes back to the seminal paper \cite{Lin64} by Lindenstrauss, and whose non-separable version was solved by Kalton in \cite{Kal11}. Indeed, suppose there existed a Banach space $X$ and a separable subspace $Y$ which is not contained in any separable Lipschitz retract of $X$. By a classic result of Heinrich and Mankiewicz \cite{HeiMan82}, there exists a separable subspace $Z$ containing $Y$ which is locally complemented, i.e.: $Z^{**}$ is linearly complemented in $X^{**}$. Therefore, $Z$ is not a Lipschitz retract of $Z^{**}$, since this would imply that $Z$ is Lipschitz retract of $X$. 

We finish this note with a brief result on $\ell_\infty$. Although $\ell_\infty$ fails the SCP, it enjoys the $2$-Lipschitz SRP (this holds in general for all $C(K)$ spaces; see Proposition 2.11 in \cite{HajQui22}). Modifying the classical argument that shows that $c_0$ is not an $r$-Lipschitz retract of $\ell_\infty$ for $r<2$, we prove that the constant $2$ is also optimal for the Lipschitz SRP in $\ell_\infty$.

\begin{theorem}    
Let $S$ be a separable subset of $\ell_\infty$ containing $c_0$, and suppose there exists a Lipschitz retraction $R\colon \ell_\infty\rightarrow S$. Then $\|R\|_\text{Lip}\geq 2$. Consequently, $\mathcal{F}(\ell_\infty)$ is not $r$-Plichko for $r<2$. 
\end{theorem}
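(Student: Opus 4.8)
The plan is to adapt the classical argument showing $c_0$ is not an $r$-Lipschitz retract of $\ell_\infty$ for $r<2$, the subtlety being that we must now contend with an arbitrary separable superset $S\supseteq c_0$ rather than $c_0$ itself. First I would set up the standard machinery: for each $A\subseteq\mathbb{N}$ let $\mathbf{1}_A\in\ell_\infty$ be the indicator, and recall the classical fact that there is an uncountable almost disjoint family $\{A_i\}_{i\in I}$ of infinite subsets of $\mathbb{N}$ (pairwise intersections finite). The point of almost disjointness is that $\|\mathbf{1}_{A_i}-\mathbf{1}_{A_j}\|_\infty=1$ for $i\neq j$, while for any $x\in c_0$ we have $\|\mathbf{1}_{A_i}-x\|_\infty\geq 1$ as well (since $A_i$ is infinite and $x$ vanishes at infinity), and moreover $\limsup$ along $A_i$ of $\mathbf{1}_{A_i}-x$ equals $1$.

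The separability of $S$ enters as follows. Since $S$ is separable and $\{\mathbf{1}_{A_i}\}_{i\in I}$ is an uncountable $1$-separated family, the images $R(\mathbf{1}_{A_i})$ cannot all be far apart; more precisely, I would argue that there exist two indices $i\neq j$ with $\|R(\mathbf{1}_{A_i})-R(\mathbf{1}_{A_j})\|$ as small as we like — say, fix $\varepsilon>0$ and extract $i,j$ with $\|R(\mathbf{1}_{A_i})-R(\mathbf{1}_{A_j})\|_\infty<\varepsilon$ (by pigeonhole: an uncountable set in a separable metric space has points arbitrarily close together; in fact it has a condensation point). Now the idea is to produce a point $z\in\ell_\infty$ that is within distance roughly $1/2$ of both $\mathbf{1}_{A_i}$ and $\mathbf{1}_{A_j}$: take $z=\tfrac12(\mathbf{1}_{A_i}+\mathbf{1}_{A_j})$, so $\|z-\mathbf{1}_{A_i}\|_\infty=\|z-\mathbf{1}_{A_j}\|_\infty=\tfrac12$ (using $\|\mathbf{1}_{A_i}-\mathbf{1}_{A_j}\|_\infty=1$). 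Since $\mathbf{1}_{A_i},\mathbf{1}_{A_j}\in S$, $R$ fixes them, so
\begin{align*}
\|R(\mathbf{1}_{A_i})-R(\mathbf{1}_{A_j})\|_\infty &= \|\mathbf{1}_{A_i}-\mathbf{1}_{A_j}\|_\infty = 1,
\end{align*}
which already contradicts $\|R(\mathbf{1}_{A_i})-R(\mathbf{1}_{A_j})\|_\infty<\varepsilon$ for small $\varepsilon$ — but that is too cheap and must be wrong, so the real argument cannot simply compare $R$ of two fixed points. The correct version: one does not get $i,j$ with $R(\mathbf{1}_{A_i})$ close to $R(\mathbf{1}_{A_j})$; instead one uses that $R(z)$, for $z$ as above, lies in $S$ and hence is \emph{some} point, and then plays off the distance estimates $d(z,\mathbf{1}_{A_i})=d(z,\mathbf{1}_{A_j})=\tfrac12$ against a lower bound for $\max\{d(R(z),\mathbf{1}_{A_i}),d(R(z),\mathbf{1}_{A_j})\}$ that must be close to $1$. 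That lower bound is where separability of $S$ is genuinely used: I would choose the family $\{A_i\}$ and then, using that $R(\ell_\infty)\subseteq S$ is separable while the relevant configuration space is not, find indices for which $R(z_{ij})$ is forced to be essentially $1$ away from at least one of $\mathbf{1}_{A_i},\mathbf{1}_{A_j}$, giving $\|R\|_{\mathrm{Lip}}\geq (1-\varepsilon)/(1/2)\to 2$.

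The main obstacle — and the step to get right — is precisely the combinatorial/measure-theoretic selection of the almost disjoint family together with the auxiliary points, so that separability of $S$ forces the near-collision of the required distances. Concretely I would index an almost disjoint family by $\{0,1\}^{\mathbb{N}}$ or by branches of the binary tree, form midpoints $z$ of pairs of branches agreeing on a long initial segment, and use that $S$ separable implies the map $z\mapsto R(z)$ takes values in a countable dense skeleton up to $\varepsilon$, so two branches $i,j$ can be chosen with $R(z_{ij})$ within $\varepsilon$ of a single fixed $s\in S$; then $s$ cannot be within $1/2+\varepsilon$ of both $\mathbf{1}_{A_i}$ and $\mathbf{1}_{A_j}$ because those two targets are at mutual distance $1$ and any ball of radius $<1/2$ meets at most one of them. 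This yields $\|R\|_{\mathrm{Lip}}\cdot\tfrac12\geq d(R(z_{ij}),\{\mathbf{1}_{A_i},\mathbf{1}_{A_j}\})\geq \tfrac12-\varepsilon$ for one of them, which is not enough; to reach the constant $2$ one must instead compare $R(z_{ij})$ with $R(\mathbf{1}_{A_i})=\mathbf{1}_{A_i}$ directly, getting $\|R\|_{\mathrm{Lip}}\geq d(\mathbf{1}_{A_i},R(z_{ij}))/d(\mathbf{1}_{A_i},z_{ij})$ where the numerator is $\geq 1-\varepsilon$ (since $R(z_{ij})$ is near $R(z_{ji})$ which is near $\mathbf{1}_{A_j}$, at distance $1$ from $\mathbf{1}_{A_i}$) and the denominator is $\tfrac12$. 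Once the Lipschitz lower bound $\|R\|_{\mathrm{Lip}}\geq 2$ is established, the consequence for $\mathcal{F}(\ell_\infty)$ is immediate from Remark \ref{Plichko_implies_LSRP}: if $\mathcal{F}(\ell_\infty)$ were $r$-Plichko with $r<2$, then $c_0$ would be contained in a separable subspace that is an $r$-Lipschitz retract of $\ell_\infty$, contradicting what we just proved.
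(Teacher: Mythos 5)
There is a genuine gap, and it sits at the very center of your argument: you repeatedly use that $R$ fixes the indicators $\mathbf{1}_{A_i}$ of an uncountable almost disjoint family, writing $R(\mathbf{1}_{A_i})=\mathbf{1}_{A_i}$. But these indicators lie in $\ell_\infty\setminus c_0$, and there is no reason they belong to $S$; worse, since $\{\mathbf{1}_{A_i}\}_{i\in I}$ is an uncountable $1$-separated family and $S$ is separable, \emph{all but countably many} of them must lie outside $S$, so the hypothesis fails for essentially every index. Without control on $R(\mathbf{1}_{A_i})$ the midpoint comparison collapses. Even granting it, your own accounting shows the midpoint configuration only yields $\|R\|_{\mathrm{Lip}}\geq 1$ (a ball of radius $<1/2$ missing one of two points at mutual distance $1$), and the attempted repair is circular: $z_{ij}=z_{ji}$ by definition, so ``$R(z_{ij})$ is near $R(z_{ji})$ which is near $\mathbf{1}_{A_j}$'' asserts exactly what needs to be proved. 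The almost disjoint family is the right tool for the \emph{linear} non-complementation of $c_0$ in $\ell_\infty$ (Phillips--Sobczyk), but it is not the classical argument for the Lipschitz constant $2$, and it does not adapt here.

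The paper's proof uses separability of $S$ in a completely different, and much more economical, way: a diagonal construction against a countable dense subset. Write $S=\overline{\{x_n\}}$, set $z(k)=-\mathrm{sign}(x_k(k))$, and let $y_k=2P_k(z)$ where $P_k$ truncates to the first $k$ coordinates; each $y_k$ lies in $c_0\subset S$ and $\|z-y_k\|_\infty\leq 1$, while for every $n$ and every $k\geq n$ one has $|x_n(n)-y_k(n)|=|x_n(n)|+2\geq 2$, whence $\limsup_k\|x-y_k\|_\infty\geq 2$ for \emph{every} $x\in S$ by density. Applying this to $x=R(z)\in S$ and using $R(y_k)=y_k$ gives $2\leq\limsup_k\|R(z)-R(y_k)\|_\infty\leq\|R\|_{\mathrm{Lip}}$. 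Your final deduction of the Plichko statement from Remark \ref{Plichko_implies_LSRP} is correct, but the Lipschitz lower bound itself is not established by your argument.
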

\begin{proof}
    Put $S =\overline{\{x_n=(x_n(k))_{k\in\mathbb{N}}\in \ell_\infty\colon n\in \mathbb{N}\}}$. Consider the sequence $z\in \ell_\infty$ given by
    $$z(k)= -\text{sign}(x_k(k))\qquad\text{ for }k\in\mathbb{N}. $$
    For every $k\in\mathbb{N}$, let $P_k\colon \ell_\infty\rightarrow \ell_\infty$ be the linear projection given by $\left(P_kx\right)(i)=x(i)$ if $i\leq k$ and $\left(P_kx\right)(i)=0$ if $i>k$ for every $x\in\ell_\infty$. With this notation, define for every $k\in\mathbb{N}$ the sequence $y_k = 2P_k(z)$, which belongs to $S$ since $S$ contains $c_0$. It is straightforward to check that $\|z-y_k\|_\infty =1$ for all $k\in\mathbb{N}$.

    For every $n\in\mathbb{N}$ and every $k\geq n$, it holds that $\|x_n-y_k\|_\infty\geq 2$, since $|y_k(n)|=2$ and the $n$-th coordinate of $x_n$ has the opposite sign. It follows that $\limsup_{k\rightarrow +\infty} \|x-y_k\|_\infty\geq 2$ for every $x\in S$. Therefore:
    $$ 2\leq \limsup_{k\rightarrow +\infty} \|R(z)-y_k\|_\infty\leq \limsup_{k\rightarrow +\infty}\|R\|_\text{Lip}\| z-y_k\|_\infty=\|R\|_\text{Lip},$$
    and the first claim is proven. The second part of the statement follows from Remark \ref{Plichko_implies_LSRP}.
\end{proof}

\section{Acknowledgements}

This research was supported by grant PID2021-122126NB-C33 funded by MCIN/AEI/\allowbreak10.13039/\allowbreak501100011033 and by “ERDF A way of making Europe”. The second author's research has been supported by PAID-01-19, by GA23-04776S and by the project SGS23/\allowbreak056/OHK3/\allowbreak 1T/13.

\printbibliography

\end{document}